\theoremstyle{plain}
\newtheorem{theorem}{Theorem}
\newtheorem{lemma}[theorem]{Lemma}
\newtheorem{corollary}[theorem]{Corollary}
\newtheorem{proposition}[theorem]{Proposition}
\theoremstyle{definition}
\newtheorem{definition}[theorem]{Definition}
\newtheorem{construction}[theorem]{Construction}
\newtheorem{example}[theorem]{Example}
\newtheorem{question}[theorem]{Question}
\theoremstyle{remark}
\newtheorem{remark}[theorem]{Remark}
\title{On the constant partial-dual polynomials of hypermaps}
\author{Yibing Xiang,
Qi Yan\footnote{Corresponding author.}\\
\small School of Mathematics and Statistics\\[-0.8ex]
\small Lanzhou University\\[-0.8ex]
\small P. R. China\\
\small{\tt Email:xiangyb2023@lzu.edu.cn; yanq@lzu.edu.cn}
}
\date{}
\journal{arXiv}
\begin{document}
\begin{abstract}
In this paper, we introduce the partial-dual polynomial for hypermaps, extending the concept from ribbon graphs. We discuss the basic properties of this polynomial and characterize it for hypermaps with exactly one hypervertex containing a non-zero constant term. Additionally, we show that the partial-dual polynomial of a prime connected hypermap $H$ is constant if and only if $H$ is a plane hypermap with a single hyperedge.
\end{abstract}
\begin{keyword}
hypergraph\sep hypermap\sep partial dual\sep genus
\vskip0.2cm
\MSC [2020] 05B35\sep 05C10\sep 05C31
\end{keyword}
\maketitle

\section{Introduction}
Several methodologies exist for describing graphs embedded in surfaces, each of which has distinct advantages. For example, cellularly embedded graphs and their representations appear in the forms of maps, ribbon graphs, band decompositions, arrow presentations, and signed rotation systems \cite{EM}.

For any ribbon graph $G$, there is a natural dual ribbon graph $G^{*}$, also known as the classical Euler-Poincar\'{e} duality. Chmutov \cite{CG} introduced an extension to geometric duality called {\it partial duality}. Generally speaking, a partial dual $G^{A}$ is obtained by forming the classical Euler-Poincar\'{e} dual with respect to a subset $A \subseteq E(G)$ of the ribbon graph $G$.

In \cite{GMT}, Gross, Mansour and Tucker introduced the enumeration of the partial duals $G^A$ of a ribbon graph $G$, based on the Euler genus $\varepsilon$, over all edge subsets $A\subseteq E(G)$. The associated generating functions, denoted as $^{\partial}\varepsilon_{G}(z)$, are called \emph{partial-dual polynomials} of $G$. This polynomial has been investigated in numerous papers \cite{QCYC, SCFV, GMT, GMT2, GMT3, QYXJ, QYXJ2}.

Maps or ribbon graphs can be thought of as graphs embedded in surfaces. Hypermaps are hypergraphs embedded into surfaces. In other words, in hypermaps, a hyperedge is allowed to connect more than two hypervertices, thus having more than two half-edges, or even just a single half-edge. Chumutov and Vignes-Tourneret \cite{SCFV2} introduced the concept of partial duality for hypermaps, which also includes the classical Euler-Poincar\'{e} duality as a special case. Independently, Smith \cite{BSM} discovered this generalization.  Additionally, Chumutov and Vignes-Tourneret expressed partial duality in terms of permutational models and provided a formula for the Euler-genus change. They also posed the following question:

\begin{question}[\cite{SCFV}]
The paper \cite{GMT} contains several interesting conjectures about the partial-dual polynomial for ribbon graphs. The definition of this polynomial works for hypermaps, as well. It would be interesting to find (and prove) the analogous conjectures for hypermaps.
\end{question}

Combinatorially, hypermaps can be described in three distinct ways \cite{SCFV2}: firstly, as three involutions on the set of flags, which constitutes the bi-rotation system or the $\tau$-model; secondly, as three permutations on the set of half-edges, forming the rotation system or the $\sigma$-model in the orientable case; and thirdly, as 3-edge-coloured graphs. In this paper, we begin by representing hypermaps as ribbon hypermaps, similar to representing maps as ribbon graphs. However, our approach is somewhat intuitive, providing geometric reasoning rather than formal proofs for the equivalences. We show that the partial-dual polynomial has ribbon hypermap analogues. We introduce the partial-dual polynomial of ribbon hypermaps and discuss its basic properties. We characterize the partial-dual polynomial of a ribbon hypermap with exactly one hypervertex containing non-zero constant term. Finally, we show that the partial-dual polynomial of a prime connected ribbon hypermap $H$ is constant if and only if $H$ is a plane ribbon hypermap with a single hyperedge.

\section{Preliminaries}

\begin{definition}[\cite{BSM}]
 A \emph{ribbon hypermap} $H=(V(H), E(H))$ is a (possibly non-orientable) surface with boundary represented as the union of two sets of discs, a set $V(H)$ of hypervertices, and a set $E(H)$ of hyperedges such that
\begin{description}
  \item[(1)] The hypervertices and hyperedges intersect in disjoint line segments called {\it common line segments};
\item[(2)] Each such common line segment lies on the boundary of precisely one hypervertex and precisely one hyperedge.
\end{description}
\end{definition}

A {\it ribbon graph} is a ribbon hypermap in which every hyperedge contains precisely two common line segments.  A ribbon hypermap is {\it orientable} if it is orientable when viewed as a surface (here, we are using the fact that a ribbon hypermap is a punctured surface).  We adopt terminology and notation for ribbon hypermaps similar to those commonly used in the literature on ribbon graphs, including the following conventions.

\begin{description}
  \item[(C1)] If $H$ is a ribbon hypermap, then $V(H), E(H)$ and $F(H)$ denote, respectively, the sets of hypervertices, hyperedges, and hyperfaces of $H$. Correspondingly, 
$v(H), e(H)$, and $f(H)$  represent the cardinalities of these sets, i.e., the numbers of hypervertices, hyperedges, and hyperfaces in $H$, respectively.
The notation $k(H)$ is used to denote the number of connected components of $H$. Moreover,  for $A\subseteq E(H)$, we let $A^c = E(H)-A$.

\item[(C2)] We view $A\subseteq E(H)$ also as the {\it sub-ribbon hypermap} obtained from a ribbon hypermap $H$ by removing from $H$ all hyperedges not in the set $A$; note that although we delete the hyperedges, we retain the hypervertices, even if they become isolated.

  \item[(C3)] Given a ribbon hypermap $H$ and a hyperedge $e\in E(H)$, the {\it degree} $d(e)$ of $e$ is defined as the number of half-edges (common line segments) incident to $e$. For any subset $A\subseteq E(H)$, we denote the total degree of hyperedges in $A$ by $d(A)=\sum_{e\in A}d(e)$. Furthermore, we define the total degree of $H$ as $d(H):=d(E(H))$.
  
  \item[(C4)] For any ribbon hypermap 
$H$, we define the {\it Euler characteristic}
$\chi(H)$, regardless of whether  $H$ is connected or not, as follows:
\[ \chi(H)=v(H)+e(H)+f(H)-d(H). \]
  
  \item[(C5)] The {\it Euler genus} 
$\varepsilon(H)$ of a ribbon hypermap 
$H$ is defined using Euler characteristic as follows:
$$\varepsilon(H)=2k(H)-\chi(H)=2k(H)+d(H)-v(H)-e(H)-f(H).$$
In particular, if $H$ is a ribbon graph, the Euler genus simplifies to: $$\varepsilon(H)=2k(H)-v(H)+e(H)-f(H).$$

\item[(C6)] A ribbon hypermap $H$
is a {\it plane ribbon hypermap} if it is connected and $\varepsilon(G) = 0$.

\item[(C7)] A {\it hyper-bouquet} is a ribbon hypermap with exactly one hypervertex. A {\it hypertree} is a ribbon hypermap $H$ with $f(H)=1$ and $\varepsilon(H)=0$. A {\it hyper-quasi-tree}, as a generalization of a hypertree, is a ribbon hypermap $H$ with $f(H)=1$, relaxing the condition $\varepsilon(H)=0$.

\end{description}

\begin{definition}[\cite{BSM}]
An \textit{arrow presentation} of a ribbon hypermap consists of
a set of closed curves, which represent the hypervertices, and sets of labeled arrows which represent the hyperedges. The set of arrows is partitioned into cyclically ordered subsets such that each subset represents a single hyperedge. We label the arrows that represent a hyperedge 
$e$ by $e_1,\cdots, e_{d(e)}$, where the indices give their cyclic order.
\end{definition}

A ribbon hypermap can be obtained from an arrow presentation as follows. Fill in the closed curves to form the hypervertex discs. Then, for each cyclically ordered subset constituting the partition of the arrows, draw a line segment from the head of one arrow to the tail of the next arrow in the cyclic order. The union of these arrows and line segments now forms a closed curve. Attach a disc along this closed curve to represent a hyperedge. Conversely, an arrow presentation can be derived from a ribbon hypermap as follows: For each hyperedge
$e$ of the ribbon hypermap, draw an arrow along one of its common line segments and label this arrow $e_1$ (note that the direction of the first arrow is arbitrary). Then, follow the boundary of the hyperedge from the head of the arrow to the next hypervertex (which may be the same hypervertex if a hyperedge intersects a single hypervertex multiple times), and draw another arrow along the next common line segment, starting at the point where the boundary meets the vertex, and label this new arrow $e_2$. Repeat this process until you return to the original common line segment. Perform this procedure for all other hyperedges. Finally, delete all the hyperedges and replace the vertex discs with closed curves to obtain an arrow presentation.

Each ribbon hypermap has a naturally corresponding ribbon graph, as follows.

\begin{construction}\label{cons01}
Let $H$ be a ribbon hypermap. For each hyperedge $e\in E(H)$, replace $e$ with a new hypervertex,
called the {\it central vertex} of $e$ and denoted by $v_e$, and introduce $d(e)$ new hyperedges, labeled $e_1, \cdots, e_{d(e)}$, each having two half-edges and incident to $v_e$ in a specific configuration, as illustrated in Figure \ref{f02}. Applying this transformation to all hyperedges yields a ribbon graph, denoted by $R(H)$.
\end{construction}

\begin{figure}[htbp]
\centering
\includegraphics[width=13cm]{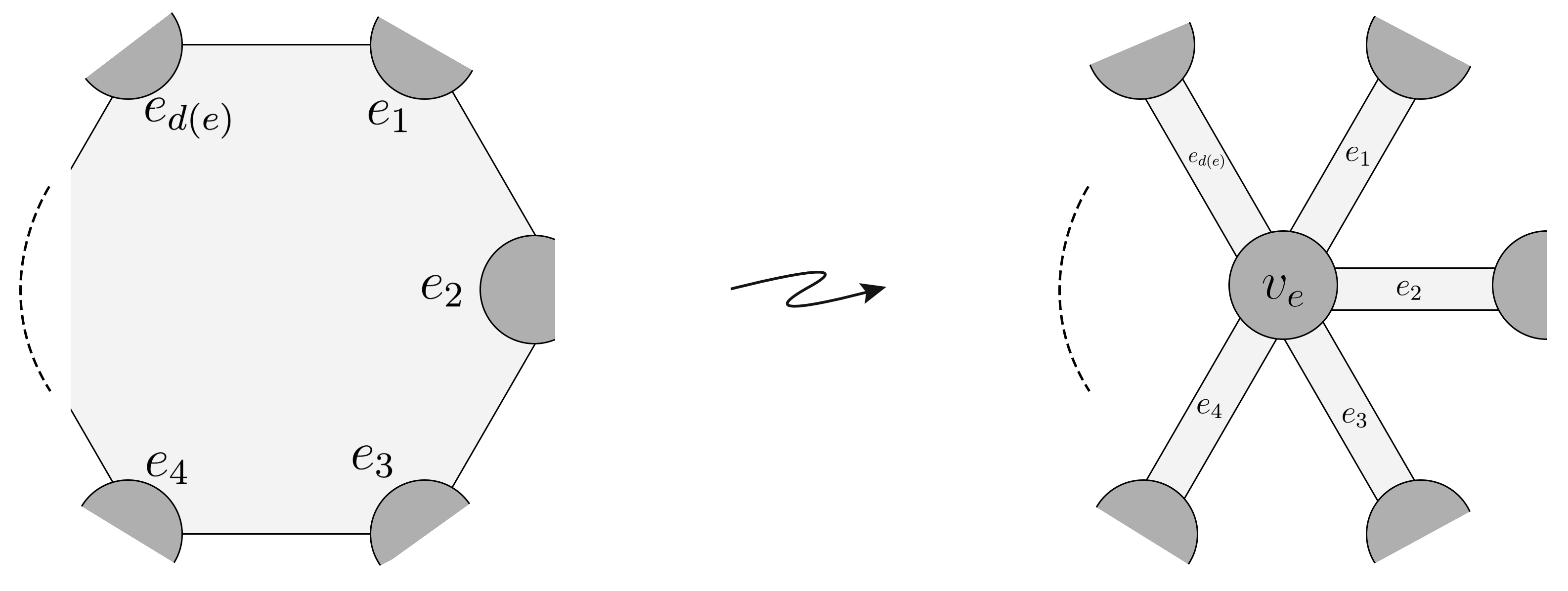}
\caption{Construction \ref{cons01}}
\label{f02}
\end{figure}
	
\begin{remark}
\begin{description}
  \item[(1)]
Given the Euler's formulas for a hypermap $H$ and its corresponding ribbon graph 
$R(H)$:
$$\varepsilon(H)=2k(H)+d(H)-v(H)-e(H)-f(H)$$ and $$\varepsilon(R(H))=2k(R(H))-v(R(H))+e(R(H))-f(R(H)),$$ 
we observe that $\varepsilon(H)=\varepsilon(R(H))$. This equality holds because of the following relationships: $k(R(H))=k(H)$, $v(R(H))=v(H)+e(H)$, $e(R(H))=d(H)$, and $f(R(H))=f(H)$. 

\item[(2)] It is evident that deleting a hyperedge $e$ from a ribbon hypermap $H$ cannot increase its Euler genus. This operation is equivalent to removing the vertex $v_e$ and the edges $e_1,\cdots, e_{d(e)}$ in $R(H)$.
\end{description}
\end{remark}

\begin{definition}
Let $H$ be a ribbon hypermap and let $v\in V(H)$. Suppose that $cl_1, cl_2, cl_3$, and $cl_4$ are  common line segments of hyperedges $e, f, g$, and $h$ respectively. Here, it is possible that $e=g$ and/or $f=h$. We say that the four common line segments $cl_1, cl_2, cl_3$, and $cl_4$ are {\it alternate} at $v$ if there exists a cyclic order $(cl_{1}\cdots  cl_{2}\cdots  cl_{3}\cdots  cl_{4}\cdots)$ around the boundary of $v$ such that the vertices $v_{e}$ and $v_{g}$  (the central vertices of $e$ and $g$) lie in one connected component of $R(H)\setminus v$, and the vertices $v_{f}$ and $v_{h}$ (the central vertices of $e$ and $g$) lie in a different connected component of $R(H)\setminus v$.
\end{definition}

\begin{example}
In Figure \ref{ex2}, we present two ribbon hypermaps where the four common line segments $cl_{1}$, $cl_{2}$, $cl_{3}$ and $cl_{4}$ alternate.
\begin{figure}[htbp]
    \centering
    \includegraphics[width=13cm]{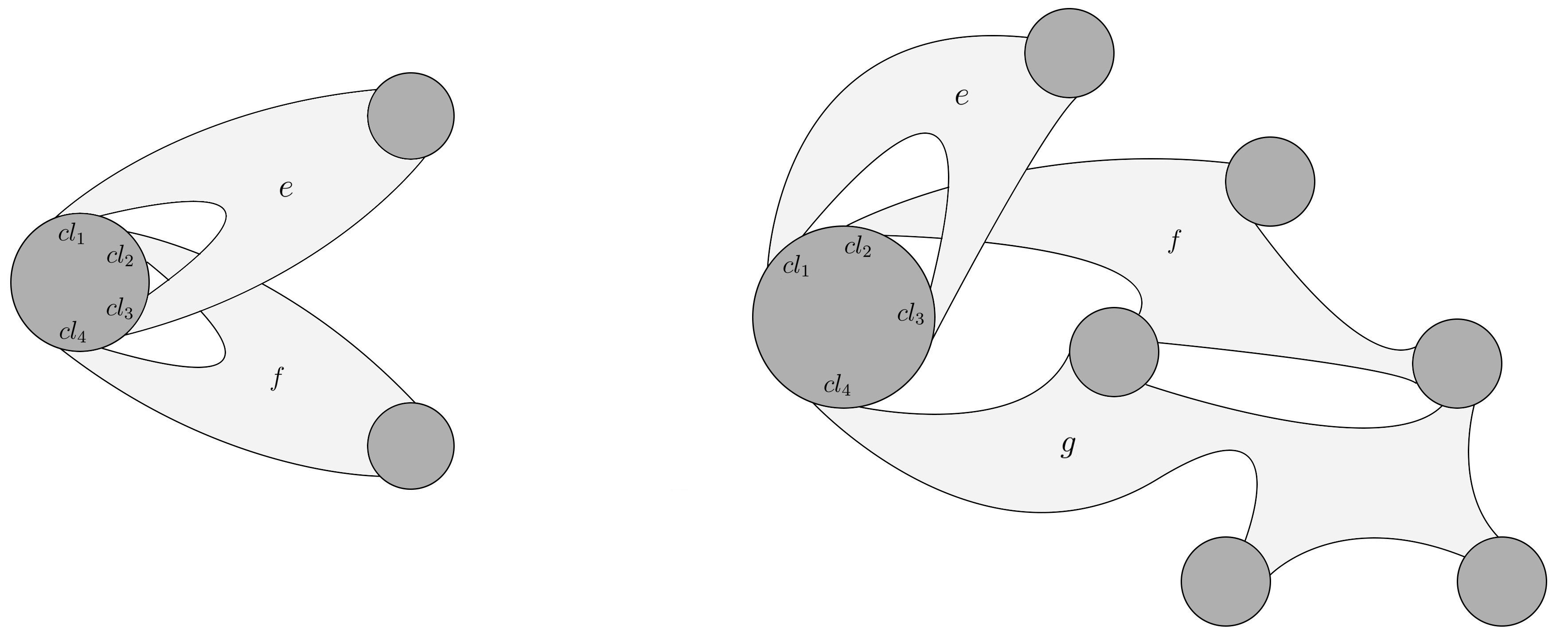}
    \caption{The four common line segments $cl_1, cl_2, cl_3$, and $cl_4$ alternate}
    \label{ex2}
\end{figure}
\end{example}

\section{Partial duality for ribbon hypermaps}

\begin{definition}[\cite{BSM}]
Let $H$ be an arrow presentation and $A\subseteq E(H)$. The {\it partial dual}, 
$H^A$, of $H$ with respect to 
$A$ is constructed as follows.
For each edge $e\in A$, in the arrow presentation of $H$, there are $d(e)$ labeled arrows cyclically ordered as $e_1, \cdots , e_{d(e)}$. For each $i$ with $1\leq i\leq d(e)-1$,
we draw a line segment with an arrow from the head of $e_i$ to the tail of $e_{i+1}$ and label this newly created arrow as $e_i$. When $i=d(e)$, we draw an arrow from the head of $e_{d(e)}$ to the tail of $e_1$ and label it $e_{d(e)}$. 
Then we delete the original arrows. The new arrows become arcs of new closed curves in the arrow presentation of $H^A$. 
\end{definition}

\begin{remark}\label{re01}
According to \cite{SCFV2}, partial duality preserves orientability of ribbon hypermaps.
\end{remark}

\begin{theorem}\label{the01}
 Let $H$ be a ribbon hypermap and $A\subseteq E(H)$. Then
$$\chi(H^{A})=\chi(A)+\chi(A^{c})-2v(H).$$
\end{theorem}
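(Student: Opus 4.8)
Proof proposal for Theorem 3.6 ($\chi(H^A) = \chi(A) + \chi(A^c) - 2v(H)$):

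The plan is to peel off the parts of the Euler-characteristic formula that are obviously preserved by partial duality, reducing the identity to a single statement about counts of hypervertices and hyperfaces, and then to prove that reduced statement via arrow presentations.

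\textbf{Step 1 (reduction).} From the construction of $H^A$, each hyperedge $e\in A$ is replaced by exactly $d(e)$ rerouted arrows and the hyperedges in $A^c$ are untouched, so $e(H^A)=e(H)$ and $d(H^A)=d(H)$. Using $\chi(X)=v(X)+e(X)+f(X)-d(X)$ together with $v(A)=v(A^c)=v(H)$ (all hypervertices are retained), $e(A)+e(A^c)=e(H)$ and $d(A)+d(A^c)=d(H)$, a short computation shows that the claimed identity $\chi(H^A)=\chi(A)+\chi(A^c)-2v(H)$ is equivalent to
$$v(H^A)+f(H^A)=f(A)+f(A^c).$$
So it suffices to prove this.

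\textbf{Step 2 (key lemma $v(H^A)=f(A)$).} I would work entirely in the arrow presentation of $H$. On one side, the sub-ribbon-hypermap $A$ is obtained by deleting the arrows of every hyperedge in $A^c$, and its hyperfaces are the boundary components of the surface built from the remaining data; these are traced by a walk that alternates between arcs of the hypervertex curves and arcs running along the boundaries of the glued $A$-hyperedge discs. On the other side, in $H^A$ the arrows of each $e\in A$ are rerouted (head of $e_i$ to tail of $e_{i+1}$, cyclically), and the new closed curves thus produced are precisely the hypervertices of $H^A$. The point is that, by inspecting the local pictures at a hypervertex and along a hyperedge, these new curves are traced by the very same walk that traces the boundary components of $A$; this yields the bijection and hence $v(H^A)=f(A)$. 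This is the hypermap analogue of the classical fact for ribbon graphs, and the argument is geometric, consistent with this paper's conventions. As sanity checks: $A=\emptyset$ gives $v(H^{\emptyset})=v(H)=f(\emptyset)$, and $A=E(H)$ gives $v(H^{E(H)})=f(H)=f(E(H))$, which is the usual interchange of hypervertices and hyperfaces under classical duality.

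\textbf{Step 3 (conclusion).} Applying the lemma to $A^c$ gives $v(H^{A^c})=f(A^c)$. Since the classical dual interchanges hypervertices and hyperfaces, $f(X)=v(X^{E(X)})$; and since partial duals compose, $(H^A)^{E(H)}=H^{A\triangle E(H)}=H^{A^c}$. Hence $f(H^A)=v\big((H^A)^{E(H)}\big)=v(H^{A^c})=f(A^c)$. Combined with $v(H^A)=f(A)$ this gives $v(H^A)+f(H^A)=f(A)+f(A^c)$, which by Step 1 is the assertion. Alternatively, one can avoid invoking composition of partial duals and prove $f(H^A)=f(A^c)$ directly, by the same tracing argument with the roles of the glued hyperedge discs and the rerouted arrows interchanged.

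\textbf{Main obstacle.} The crux is Step 2: making the correspondence between the hypervertices of $H^A$ and the hyperfaces of $A$ fully precise. Because a hyperedge of a hypermap may meet many hypervertices (or one hypervertex several times) and the common line segments of $A$- and $A^c$-hyperedges interleave around each hypervertex, the boundary-tracing walk must be set up with care. In particular, the tempting shortcut of writing $H$ as a connect sum of $A$ and $A^c$ along the hypervertex discs fails precisely because of this interleaving — it would wrongly force $v(H)+f(H)=f(A)+f(A^c)$, which is false already for a one-vertex orientable hypermap with two interlaced degree-$2$ hyperedges. Once Step 2 is in hand, the rest is bookkeeping with the Euler-characteristic formula.
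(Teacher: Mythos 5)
Your proposal is correct and follows essentially the same route as the paper: reduce to the identities $v(H^A)=f(A)$ and $f(H^A)=v((H^A)^{E(H)})=v(H^{A^c})=f(A^c)$, then finish by bookkeeping with $\chi(X)=v(X)+e(X)+f(X)-d(X)$ and $v(A)=v(A^c)=v(H)$. The only difference is that you spell out the boundary-tracing argument for $v(H^A)=f(A)$, which the paper simply substitutes as an immediate consequence of the definition of partial duality.
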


\begin{proof}
Given the Euler characteristic of a ribbon hypermap 
$H^{A}$ as: \[\chi(H^{A})=v(H^{A})+e(H^{A})+f(H^{A})-d(H^{A}),\]
we substitute $v(H^{A})=f(A)$, $e(H^{A})=e(H)$, and $d(H^{A})=d(H)$, yielding
		\[\chi(H^{A})=f(A)+e(H)+f(H^{A})-d(H).  \]
By the definition of the partial dual, we have $$f(H^{A})=v((H^{A})^{*})=v(H^{A^{c}})=f(A^{c}).$$ 
Using the fact that $f(A)=\chi(A)-v(A)-e(A)+d(A)$ and $f(A^{c})=\chi(A^{c})-v(A^{c})-e(A^{c})+d(A^{c})$, and noting that  $v(A)=v(A^{c})=v(H)$, we can rewrite the expression for 
$\chi(H^{A})$ as:
\begin{eqnarray*}
\chi(H^{A})&=&(\chi(A)-v(A)-e(A)+d(A))+e(H)+\\
&~&(\chi(A^{c})-v(A^{c})-e(A^{c})+d(A^{c}))-d(H)\\
		&=&\chi(A)-v(A)+\chi(A^{c})-v(A^{c})\\
&=&\chi(A)+\chi(A^{c})-2v(H).
		\end{eqnarray*}
	\end{proof}

\begin{corollary}\label{cor04}
Let $H$ be a ribbon hypermap and $A\subseteq E(H)$. Then
\[ \varepsilon(H^{A})=\varepsilon(A)+\varepsilon(A^{c})+2(k(H)-k(A)-k(A^{c}))+2v(H). \]
\end{corollary}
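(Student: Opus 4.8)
The plan is to prove the corollary by a direct substitution of the definition of Euler genus into the Euler-characteristic identity of Theorem~\ref{the01}. Recall that for any ribbon hypermap $K$ we have $\varepsilon(K)=2k(K)-\chi(K)$, equivalently $\chi(K)=2k(K)-\varepsilon(K)$. First I would apply this to the three ribbon hypermaps $H^{A}$, $A$, and $A^{c}$ appearing in Theorem~\ref{the01}, writing $\chi(H^{A})=2k(H^{A})-\varepsilon(H^{A})$, $\chi(A)=2k(A)-\varepsilon(A)$, and $\chi(A^{c})=2k(A^{c})-\varepsilon(A^{c})$. Substituting these into $\chi(H^{A})=\chi(A)+\chi(A^{c})-2v(H)$ gives
\[
2k(H^{A})-\varepsilon(H^{A}) = \bigl(2k(A)-\varepsilon(A)\bigr)+\bigl(2k(A^{c})-\varepsilon(A^{c})\bigr)-2v(H).
\]

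Next I would solve this equation for $\varepsilon(H^{A})$, obtaining
\[
\varepsilon(H^{A}) = \varepsilon(A)+\varepsilon(A^{c})+2k(H^{A})-2k(A)-2k(A^{c})+2v(H).
\]
To finish, I would identify $k(H^{A})$ with $k(H)$. This uses the fact that partial duality does not change the underlying ``incidence structure'' of which hypervertex/hyperedge groups are connected to which: the partial dual $H^{A}$ is built on the same set of hyperedges, and two hyperedges lie in the same component of $H^{A}$ if and only if they do in $H$ (one may see this from the arrow-presentation description, since the operation reorganizes closed curves locally at each hyperedge of $A$ without merging or splitting components, or simply invoke that the connectivity is an invariant recorded by the permutational model). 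Substituting $k(H^{A})=k(H)$ yields exactly
\[
\varepsilon(H^{A})=\varepsilon(A)+\varepsilon(A^{c})+2\bigl(k(H)-k(A)-k(A^{c})\bigr)+2v(H),
\]
as claimed.

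The only genuinely non-routine point is the justification that $k(H^{A})=k(H)$; everything else is bookkeeping with the definitions of $\chi$ and $\varepsilon$. I expect this connectivity invariance either to have been established implicitly earlier (it is analogous to the corresponding fact for ribbon graphs, where partial duality preserves the number of connected components) or to follow immediately from Remark~\ref{re01} and the permutational model of \cite{SCFV2}; if a short argument is wanted, one can note that in the arrow presentation the arrows labelled by a given hyperedge $e\in A$ are simply rerouted among the same closed curves they already touched, so no component is created or destroyed. With $k(H^{A})=k(H)$ in hand, the corollary is immediate.
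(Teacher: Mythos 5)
Your proposal is correct and matches the paper's own proof essentially verbatim: both substitute $\chi(K)=2k(K)-\varepsilon(K)$ for $K=H^{A},A,A^{c}$ into the identity of Theorem~\ref{the01} and solve for $\varepsilon(H^{A})$. The only difference is that you explicitly justify $k(H^{A})=k(H)$, a fact the paper uses silently here (and states without proof later); your extra care on that point is welcome but does not change the argument.
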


\begin{proof}
We first make some substitutions based on the formula for $\varepsilon(H)$:
\[ \chi(H^{A})=2k(H^{A})-\varepsilon(H^{A});\ \chi(A)=2k(A)-\varepsilon(A);\ \chi(A^{c})=2k(A^{c})-\varepsilon(A^{c}). \]
According to Theorem \ref{the01}, we have
\[ \varepsilon(H^{A})=\varepsilon(A)+\varepsilon(A^{c})+2(k(H)-k(A)-k(A^{c}))+2v(H). \]
\end{proof}

\begin{corollary}\label{cor01}
Let $H$ be a hyper-bouquet and $A\subseteq E(H)$. Then
\[ \varepsilon(H^{A})=\varepsilon(A)+\varepsilon(A^{c}). \]
\end{corollary}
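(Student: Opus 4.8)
The plan is to obtain this as an immediate specialization of Corollary~\ref{cor04}, so the entire argument reduces to pinning down the connectivity data of a hyper-bouquet and its sub-ribbon hypermaps. First I would record that a hyper-bouquet $H$ has $v(H)=1$ by definition, and that it is connected: every hyperedge meets the unique hypervertex along its common line segments, so no component can be separated from that vertex, giving $k(H)=1$.

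Next I would analyse $A$ and $A^{c}$ as sub-ribbon hypermaps in the sense of convention (C2). Deleting the hyperedges of $A^{c}$ from $H$ keeps the single hypervertex of $H$ (we retain hypervertices even when they become isolated), so $A$ again has exactly one hypervertex and is therefore connected; hence $k(A)=1$, and by the same reasoning $k(A^{c})=1$. With $v(H)=k(H)=k(A)=k(A^{c})=1$ in hand, I would substitute into the identity of Corollary~\ref{cor04},
\[
\varepsilon(H^{A})=\varepsilon(A)+\varepsilon(A^{c})+2\bigl(k(H)-k(A)-k(A^{c})\bigr)+2v(H),
\]
so that the correction term becomes $2(1-1-1)+2\cdot 1=0$, leaving exactly $\varepsilon(H^{A})=\varepsilon(A)+\varepsilon(A^{c})$.

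I do not expect a genuine obstacle here, since the heavy lifting was already done in Theorem~\ref{the01} and Corollary~\ref{cor04}; the only point requiring a (trivial) check is that passing to a sub-ribbon hypermap never disconnects a hyper-bouquet, which holds precisely because the hypervertex is never removed. If one wished to be fully self-contained one could instead re-derive the formula directly from Theorem~\ref{the01} using $v(H^{A})=f(A)$, $f(H^{A})=f(A^{c})$, $e(H^{A})=e(H)$, $d(H^{A})=d(H)$ together with $v(A)=v(A^{c})=1$, but invoking Corollary~\ref{cor04} is cleaner.
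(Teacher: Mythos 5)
Your proposal is correct and follows the same route as the paper: both substitute $k(H)=k(A)=k(A^{c})=v(H)=1$ into Corollary~\ref{cor04} so that the correction term $2(k(H)-k(A)-k(A^{c}))+2v(H)$ vanishes. Your additional remark that sub-ribbon hypermaps of a hyper-bouquet stay connected because the unique hypervertex is retained is exactly the (implicit) justification the paper relies on.
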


\begin{proof}
By Corollary \ref{cor04}, replacing $$k(A)=k(A^{c})=k(H)=k(H^{A})=v(H)=1$$ immediately yields $$\varepsilon(H^{A})=\varepsilon(A)+\varepsilon(A^{c}).$$
\end{proof}

\begin{proposition}
Let $H$ be a ribbon hypermap and $e\in E(H)$. Then
\[ \lvert \varepsilon(H)-\varepsilon(H^{e})\rvert\leqslant2(d(e)-1). \]\end{proposition}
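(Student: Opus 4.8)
The plan is to reduce the statement to the special case $A=\{e\}$ of Corollary~\ref{cor04}. Applying that corollary with $A=\{e\}$ gives
\[
\varepsilon(H^{e})=\varepsilon(\{e\})+\varepsilon(A^{c})+2\bigl(k(H)-k(\{e\})-k(A^{c})\bigr)+2v(H),
\]
so the whole problem becomes controlling the right-hand side in terms of $\varepsilon(H)$ and $d(e)$. First I would analyze the single-hyperedge ribbon hypermap $\{e\}$: it has one hyperedge of degree $d(e)$, at most $d(e)$ hypervertices (those touched by $e$), and $k(\{e\})\le d(e)$ components; its Euler genus satisfies $0\le\varepsilon(\{e\})\le d(e)-1$, with the extreme values coming from, e.g., a planar ``star'' on one side versus a single hypervertex wrapped so that the hyperedge disc is attached non-orientably or with high genus. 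So I would first record the sharp bound $\varepsilon(\{e\})\le d(e)-1$ as a small lemma, proved directly from the Euler-characteristic formula $\varepsilon(\{e\})=2k(\{e\})+d(e)-v(\{e\})-1-f(\{e\})$ together with $f(\{e\})\ge 1$ and $v(\{e\})\ge k(\{e\})$.

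Next I would handle the comparison between $H$ and $H^{e}$ from the two directions separately, using the ``delete a hyperedge cannot increase Euler genus'' remark after Construction~\ref{cons01}. Writing $H\setminus e = A^{c}$ (as a sub-ribbon hypermap), that remark gives $\varepsilon(A^{c})\le\varepsilon(H)$, and similarly, since $(H^{e})\setminus e$ has the same underlying sub-ribbon hypermap $A^{c}$ up to relabeling, $\varepsilon(A^{c})\le\varepsilon(H^{e})$ as well. I would then combine $\varepsilon(H^{e})\le\varepsilon(A^{c})+(\text{bounded error})$ with $\varepsilon(H)\ge\varepsilon(A^{c})$ to get one inequality, and the symmetric statement (using that $H$ is a partial dual of $H^{e}$, namely $H=(H^{e})^{e}$) to get the other. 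The bounded-error term is exactly $\varepsilon(\{e\})+2\bigl(k(H)-k(\{e\})-k(A^{c})\bigr)+2v(H)$, which I need to show lies in $[-2(d(e)-1),\,2(d(e)-1)]$; since $k(H)-k(\{e\})-k(A^{c})$ can be estimated by noting $H$ is built from $\{e\}$ and $A^{c}$ by gluing along the $v(H)$ shared hypervertices, one gets $k(H)\ge k(\{e\})+k(A^{c})-v(H)+\text{(stuff)}$, and these component counts cancel against the $+2v(H)$ term, leaving the genuinely non-cancelling contribution $\varepsilon(\{e\})\le d(e)-1$.

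The main obstacle will be bookkeeping the interaction between the component counts $k(H),k(\{e\}),k(A^{c})$ and the vertex count $v(H)$ cleanly enough that Corollary~\ref{cor04} collapses to something with only an $\varepsilon(\{e\})$ error term; one has to be careful that ``$\varepsilon$ is monotone under hyperedge deletion'' is applied to the right sub-ribbon hypermaps and that $A^{c}$ really is a common sub-ribbon hypermap of both $H$ and $H^{e}$ (this is where Remark~\ref{re01} and the fact that partial duality keeps $V$ and $E$ fixed do the work). An alternative, perhaps cleaner, route I would keep in reserve is to pass entirely to the associated ribbon graph $R(H)$: since $\varepsilon(H)=\varepsilon(R(H))$ and the partial dual $H^{e}$ corresponds to taking the ribbon-graph partial dual $R(H)^{\{e_1,\dots,e_{d(e)}\}}$ (contracting the star at $v_e$), the inequality would follow from iterating the known ribbon-graph bound $|\varepsilon(G)-\varepsilon(G^{e'})|\le 2$ for a single ordinary edge $e'$ over the $d(e)$ edges $e_1,\dots,e_{d(e)}$ — but I would need to confirm that $H^{e}$ indeed matches $R(H)^{\{e_1,\dots,e_{d(e)}\}}$ under the construction, which itself requires a short geometric argument, so I would present the direct Euler-characteristic proof as the primary one and mention this reduction only as a remark.
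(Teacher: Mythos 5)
Your route is viable and genuinely different in its inputs from the paper's. The paper computes $\varepsilon(H)-\varepsilon(H^{e})=f(e)-v(H)+f(H-e)-f(H)$ directly, bounds $f(e)$ via $\varepsilon(e)\geq 0$, and bounds $f(H-e)-f(H)\leq d(e)-1$ by the geometric observation that deleting a hyperedge creates at most $d(e)-1$ new boundary components; the lower bound then comes from the involution $(H^{e})^{e}=H$, exactly as you propose. You instead invoke Corollary \ref{cor04} with $A=\{e\}$ together with monotonicity of $\varepsilon$ and of $k$ under hyperedge deletion. Each route rests on one unproved topological fact (yours on Remark (2) after Construction \ref{cons01}, the paper's on the boundary-component count), so neither is more rigorous; yours has the advantage of reusing the already-proved Corollary \ref{cor04} instead of re-deriving part of Theorem \ref{the01} inside the proof.

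Two points in your bookkeeping need repair before the plan closes. First, by convention (C2) the sub-ribbon hypermap $\{e\}$ retains \emph{all} hypervertices of $H$, so $v(\{e\})=v(H)$ (not at most $d(e)$) and $k(\{e\})=v(H)-m+1$, where $m$ is the number of distinct hypervertices met by $e$; your lemma as stated (using only $f(\{e\})\geq 1$ and $v(\{e\})\geq k(\{e\})$) yields $\varepsilon(\{e\})\leq k(\{e\})+d(e)-2$, which is too weak — you need $f(\{e\})\geq k(\{e\})$, which gives the sharper $\varepsilon(\{e\})\leq d(e)-m$. Second, and more importantly, the cancellation cannot leave only the term $\varepsilon(\{e\})\leq d(e)-1$: that would prove the bound $d(e)-1$, which is refuted by the tightness example in Remark \ref{remark1}. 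The correct accounting is
\[
\varepsilon(H^{e})-\varepsilon(A^{c})=\varepsilon(\{e\})+2\bigl(k(H)-k(A^{c})\bigr)+2(m-1)\leq (d(e)-m)+0+2(m-1)\leq 2(d(e)-1),
\]
using $k(A^{c})\geq k(H)$ and $m\leq d(e)$; combined with $\varepsilon(A^{c})\leq\varepsilon(H)$ this gives $\varepsilon(H^{e})-\varepsilon(H)\leq 2(d(e)-1)$, and the involution (noting that partial duality preserves $d(e)$) gives the other direction. You do not need the dubious identification of $A^{c}$ as a common sub-ribbon hypermap of $H$ and $H^{e}$; drop that claim.
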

	
\begin{proof}
Starting with the Euler's formula
$$\varepsilon(H)=2k(H)+d(H)-v(H)-e(H)-f(H)$$ and $$\varepsilon(H^e)=2k(H^e)+d(H^e)-v(H^e)-e(H^e)-f(H^e).$$
Since the operation of partial dual does not change the number of half-edges and connected components, and given that $v(H^{e})=f(e)$, $e(H^{e})=e(H)$ and $f(H^{e})=f(H-e)$, we have $$\varepsilon(H)-\varepsilon(H^e)=v(H^{e})-v(H)+f(H^{e})-f(H)=f(e)-v(H)+f(H-e)-f(H).$$
By Euler's formula, we have $$\varepsilon(e)=2k(e)+d_{e}-v(e)-1-f(e).$$ Then we get $$f(e)=2k(e)+d_{e}-v(e)-1-\varepsilon(e)\leq 2k(e)+d_{e}-v(H)-1.$$
We observe that deleting a hyperedge $e$ will increase the number of boundary components by at most $d_{e}-1$ if the half-edges of $e$ are incident to distinct boundary components of $H-e$.
Hence, 
\begin{eqnarray*}
\varepsilon(H)-\varepsilon(H^e)
&=&v(H^e)-v(H)+f(H^e)-f(H)\\
&=&f(e)-v(H)+f(H-e)-f(H)\\
&\leq&2k(e)+d_{e}-v(H)-1-v(H)+d_{e}-1\\
&=&2(k(e)-v(H))+2(d_{e}-1)\\
&\leq&2(d_{e}-1).
\end{eqnarray*}
Moreover, we claim that $$\varepsilon(H)-\varepsilon(H^e)\geq 2(1-d_{e}).$$ If $\varepsilon(H)-\varepsilon(H^e)<2(1-d_{e})$, then $$\varepsilon(H^e)-\varepsilon((H^e)^e)=\varepsilon(H^e)-\varepsilon(H)>2(d_{e}-1),$$ which is a contradiction.
\end{proof}

\begin{remark}\label{remark1}
The upper bound is tight. For the ribbon hypermap $H$ as illustrated in Figure \ref{11}, we have $$\varepsilon(H)=2k(H)+d(H)-v(H)-e(H)-f(H)=2+2i-1-2-1=2i-2$$ and by Corollary \ref{cor01}, $\varepsilon (H^{e})=\varepsilon (e)+\varepsilon (f)=0$.
Therefore, \[\lvert \varepsilon(H)-\varepsilon(H^e)\rvert=2(i-1)=2(d(e)-1).\]
\begin{figure}[htbp]
		\centering
	\includegraphics[width=4cm]{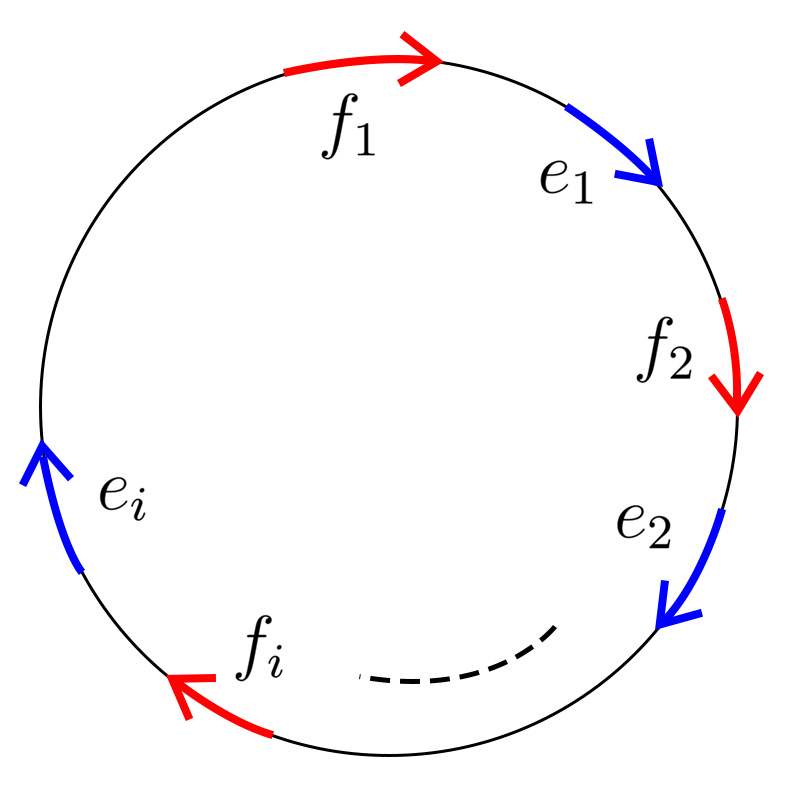}
		\caption{Remark \ref{remark1}}
        \label{11}
	\end{figure}
\end{remark}

\section{The partial-dual polynomial}
\begin{definition}
The \emph{partial-dual polynomial} of any ribbon hypermap $H$ is the generating function
$$^{\partial}\varepsilon_{H}(z):=\sum_{A\subseteq E(H)}z^{\varepsilon(H^A)}$$
that enumerates all partial-duals of $H$ by Euler-genus.
\end{definition}

\begin{proposition}\label{pro00}
Let H be a ribbon hypermap and $A\subseteq E(H)$. Then
\begin{description}
\item[(1)] $^{\partial}\varepsilon_{H}(1)=2^{e(H)}$.
\item[(2)] $^\partial{\varepsilon_{H}}(z)={^\partial{\varepsilon_{H^{A}}}}(z).$
\item[(3)] The degree of $^{\partial}\varepsilon_{H}(z)$ is at most $d(H)-e(H)$.
\end{description}
\end{proposition}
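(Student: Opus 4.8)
The plan is to treat the three items separately: (1) is immediate from the definition, (2) is the group-action (composition) property of partial duality, and (3) is a short Euler-genus computation combined with an elementary face count. For (1), setting $z=1$ in the definition gives
\[ {}^{\partial}\varepsilon_{H}(1)=\sum_{A\subseteq E(H)}1^{\varepsilon(H^{A})}=\sum_{A\subseteq E(H)}1=2^{e(H)}, \]
since $E(H)$ has exactly $2^{e(H)}$ subsets.

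For (2), I would invoke the composition rule for partial duals of ribbon hypermaps, $(H^{A})^{B}=H^{A\triangle B}$ for all $A,B\subseteq E(H)$, where $\triangle$ denotes symmetric difference; this is implicit already in the excerpt (e.g.\ in the identity $(H^{e})^{e}=H$ used above, and in $(H^{A})^{*}=H^{A^{c}}$ used in the proof of Theorem \ref{the01}), and it is established in \cite{BSM,SCFV2}. Since $E(H^{A})=E(H)$, as $B$ runs over all subsets of $E(H)$ the set $C=A\triangle B$ also runs over all subsets of $E(H)$, each exactly once. Hence
\[ {}^{\partial}\varepsilon_{H^{A}}(z)=\sum_{B\subseteq E(H)}z^{\varepsilon((H^{A})^{B})}=\sum_{B\subseteq E(H)}z^{\varepsilon(H^{A\triangle B})}=\sum_{C\subseteq E(H)}z^{\varepsilon(H^{C})}={}^{\partial}\varepsilon_{H}(z). \]

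For (3), the degree of ${}^{\partial}\varepsilon_{H}(z)$ equals $\max_{A\subseteq E(H)}\varepsilon(H^{A})$ (all coefficients are positive, so there is no cancellation), so it suffices to show $\varepsilon(H^{A})\le d(H)-e(H)$ for every $A\subseteq E(H)$. Using the substitutions from the proof of Theorem \ref{the01} ($v(H^{A})=f(A)$, $f(H^{A})=f(A^{c})$, $e(H^{A})=e(H)$, $d(H^{A})=d(H)$) together with $k(H^{A})=k(H)$, the Euler-genus formula yields
\[ \varepsilon(H^{A})=2k(H)+d(H)-f(A)-e(H)-f(A^{c})=d(H)-e(H)+\bigl(2k(H)-f(A)-f(A^{c})\bigr) \]
(alternatively this follows by expanding Corollary \ref{cor04}). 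So the claim reduces to the inequality $f(A)+f(A^{c})\ge 2k(H)$. To see this, note that any sub-ribbon-hypermap $B\subseteq E(H)$ is a compact surface with boundary, so each of its connected components has at least one boundary component, giving $f(B)\ge k(B)$; and since $B$ is obtained from $H$ by deleting hyperedges while keeping all hypervertices, deleting an edge cannot merge two components, so $k(B)\ge k(H)$. Applying this with $B=A$ and with $B=A^{c}$ gives $f(A)+f(A^{c})\ge k(A)+k(A^{c})\ge 2k(H)$, which completes the argument.

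The only genuinely non-routine ingredient is the composition rule $(H^{A})^{B}=H^{A\triangle B}$ behind part (2); if one wants a self-contained treatment, verifying it directly from the arrow-presentation definition is the main obstacle, though it is a standard (if somewhat lengthy) bookkeeping argument on arrows, entirely parallel to the ribbon-graph case. Parts (1) and (3), by contrast, are elementary substitutions and counting observations.
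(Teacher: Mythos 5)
Your proposal is correct and takes essentially the same route as the paper: (1) and (2) are identical in substance (the paper justifies (2) by noting that $H$ and $H^{A}$ have the same set of partial duals, which is exactly your $(H^{A})^{B}=H^{A\triangle B}$ bijection made explicit). For (3) the paper argues slightly more directly, writing $\varepsilon(H^{A})=2k(H^{A})+d(H)-v(H^{A})-e(H)-f(H^{A})$ and using $k(H^{A})\le v(H^{A})$ and $k(H^{A})\le f(H^{A})$; this is the same ``each component has at least one vertex and one face'' observation underlying your inequality $f(A)+f(A^{c})\ge 2k(H)$, just without the detour through the substitutions $v(H^{A})=f(A)$, $f(H^{A})=f(A^{c})$.
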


\begin{proof}
For (1), the evaluation $^{\partial}\varepsilon_{H}(1)$ counts the total number of partial duals, which is given by $2^{e(H)}$.
For (2),  since the sets of all partial duals of $H$ and $H^{A}$ are identical, the equation holds.
For (3),  
for any $A\subseteq E(H)$,
by Euler's formula, we have $$\varepsilon(H^{A})=2k(H^{A})+d(H)-v(H^{A})-e(H)-f(H^{A}).$$ Since $k(H^{A})\leqslant v(H^{A})$ and $k(H^{A})\leqslant f(H^{A})$, it follows that $$\varepsilon(H^{A})\leqslant d(H)-e(H).$$
\end{proof}

\begin{remark}\label{remark2}
For (3), the upper bound is tight. For the ribbon hypermap $H$ as illustrated in Figure \ref{ex1}, we have $d(H)=9$, $e(H)=3$ and $^\partial {\varepsilon_{H}}(z)=6z^{4}+2z^{6}$. The degree of $^{\partial}\varepsilon_{H}(z)$ equals $d(H)-e(H)=9-3=6$.
\begin{figure}[htbp]
    \centering
    \includegraphics[width=5cm]{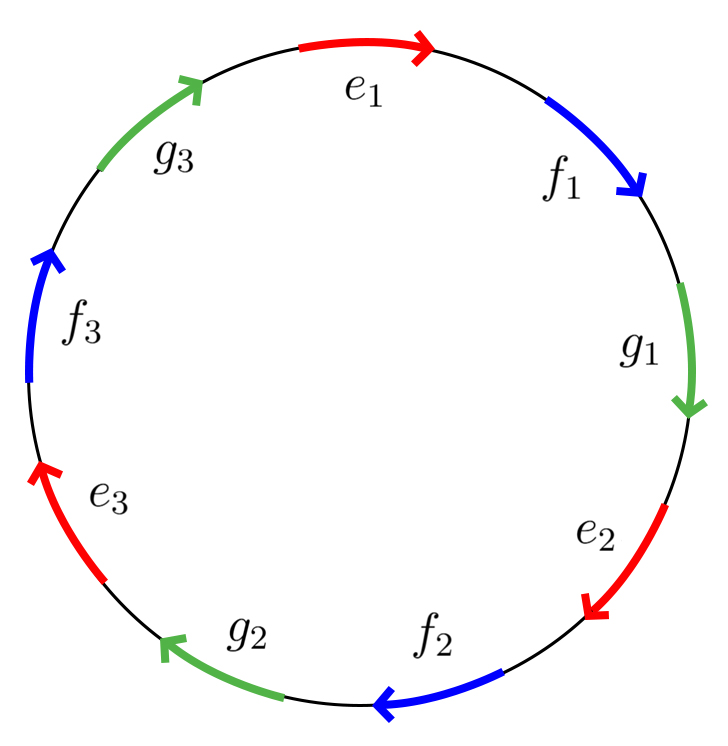}
    \caption{Remark \ref{remark2}}
    \label{ex1}
\end{figure}
\end{remark}

There are ways to combine two ribbon hypermaps $H_1$ and $H_2$ into a graph $H$ whose partial-dual polynomial is the product $^\partial{\varepsilon_{H_{1}}}(z){^\partial{\varepsilon_{H_{2}}}}(z)$. The {\it disjoint union} of $H_1$ and $H_2$, denoted by $H_{1}\cup H_{2}$, is formed by taking the unions of their hypervertices,  hyperedges, and hyperfaces.
We define the {\it one-vertex-joint} operation on two disjoint ribbon hypermaps $H_{1}$ and $H_{2}$, denoted by $H_{1}\vee H_{2}$, in two steps:

\begin{description}
  \item[(1)] Choose an arc $p_{1}$ on the boundary of a hypervertex $v_{1}$ of $H_{1}$ that lies between two consecutive hyperedge ends, and choose another such arc $p_{2}$ on the boundary of a hypervertex $v_{2}$ of $H_{2}$.
  \item[(2)] Paste the hypervertices $v_{1}$ and $v_{2}$ together by identifying the arcs $p_{1}$ and $p_{2}$.
\end{description}

\begin{proposition}\label{pro01}
Let $H_{1}$ and $H_{2}$ be disjoint ribbon hypermaps. Then
\[ ^\partial{\varepsilon_{H_{1}\cup H_{2}}}(z)={^\partial{\varepsilon_{H_{1}\vee H_{2}}}}(z)={^\partial{\varepsilon_{H_{1}}}}(z){^\partial{\varepsilon_{H_{2}}}}(z). \]
\end{proposition}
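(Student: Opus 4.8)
The plan is to prove the two equalities in turn, reducing everything to the genus-additivity results already established. First I would handle the disjoint union. For $H_1\cup H_2$, every edge subset $A\subseteq E(H_1\cup H_2)$ splits uniquely as $A=A_1\sqcup A_2$ with $A_i\subseteq E(H_i)$, and since partial duality is performed edgewise and the two hypermaps share no hypervertices, the partial dual $(H_1\cup H_2)^A$ is just the disjoint union $H_1^{A_1}\cup H_2^{A_2}$. Euler characteristic is additive over disjoint unions, and so is the number of components, hence $\varepsilon$ is additive: $\varepsilon((H_1\cup H_2)^A)=\varepsilon(H_1^{A_1})+\varepsilon(H_2^{A_2})$. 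Summing $z^{\varepsilon}$ over all $A$ and factoring the double sum over $A_1$ and $A_2$ gives $^\partial\varepsilon_{H_1\cup H_2}(z)={^\partial\varepsilon_{H_1}}(z)\,{^\partial\varepsilon_{H_2}}(z)$.

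Next I would show $^\partial\varepsilon_{H_1\vee H_2}(z)={^\partial\varepsilon_{H_1\cup H_2}}(z)$. The key observation is that $H_1\vee H_2$ has the same hyperedges (with the same degrees and cyclic arrow-orders) as $H_1\cup H_2$; the only change is that the two distinguished hypervertices $v_1,v_2$ are merged into one hypervertex $v$, with the arc-pasting chosen between consecutive hyperedge ends so that no common line segment is disturbed. Thus for any $A$, the arrow presentations of $(H_1\vee H_2)^A$ and $(H_1\cup H_2)^A$ differ only in that the circles coming from $v_1$ and $v_2$ are joined into a single circle along an arc carrying no arrowheads. This changes neither the number of hyperedges, nor the number of hyperfaces (the face structure is determined by the arrows, which are untouched), and it changes the hypervertex count by exactly $-1$ while also changing the component count by exactly $-1$ (the joint connects the $H_1$-part to the $H_2$-part, and this remains true after taking partial duals since partial duality preserves the component count — here one must note that the merged vertex $v$ keeps the two sides connected in $(H_1\vee H_2)^A$). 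Hence $\varepsilon((H_1\vee H_2)^A)=2k-v-e-f+d$ is unchanged: the $-1$ in $v$ and the $-1$ in $k$ cancel in $2k-v$. Summing over $A$ gives the middle equality.

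The main obstacle is the bookkeeping in the second step: one must verify carefully that merging the two base hypervertices drops both $v$ and $k$ by exactly one \emph{after} applying an arbitrary partial dual $(\cdot)^A$, not merely before. The cleanest way is to argue at the level of arrow presentations — the partial-duality operation is local to each hyperedge and acts only on arrowheads/tails lying on the vertex circles, so the ``splicing arc'' $p_1=p_2$ (which contains no arrowheads, being between consecutive hyperedge ends) is inert under $(\cdot)^A$ for every $A$; therefore $(H_1\vee H_2)^A$ is obtained from $(H_1\cup H_2)^A=H_1^{A_1}\cup H_2^{A_2}$ by the very same arc-identification on the resulting surfaces, which manifestly connects the two components and reduces $v$ by one. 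Alternatively, one can invoke Corollary~\ref{cor04} twice and match terms, but the arrow-presentation argument is more transparent. Once this is in place, combining it with the first paragraph yields the full chain of equalities.
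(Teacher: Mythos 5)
Your first step (the disjoint union) is correct and is exactly the paper's argument: for $A=A_1\sqcup A_2$ one has $(H_1\cup H_2)^A=H_1^{A_1}\cup H_2^{A_2}$, Euler genus is additive over disjoint union, and the generating function factors. Your overall strategy for the join --- arguing that the splicing arc, carrying no arrows, is inert under partial duality, so that $(H_1\vee H_2)^A$ is obtained from $H_1^{A_1}\cup H_2^{A_2}$ by a single one-vertex-join, and then checking that a join does not change Euler genus --- is also the paper's strategy; the paper simply states $(H_1\vee H_2)^A=H_1^{A_1}\vee H_2^{A_2}$ and appeals to additivity of Euler genus over $\vee$ without computation.

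However, your bookkeeping for the join contains a genuine error. You assert that merging $v_1$ and $v_2$ leaves the number of hyperfaces unchanged ``because the face structure is determined by the arrows.'' It is not: the boundary components of the surface also run along the free arcs of the hypervertex circles, and identifying $p_1$ with $p_2$ merges the boundary component $b_1$ of $H_1^{A_1}$ containing $p_1$ with the boundary component $b_2$ of $H_2^{A_2}$ containing $p_2$ (these are distinct, the two surfaces being disjoint), so $f$ drops by exactly one. Moreover, even granting your stated changes $\Delta k=\Delta v=-1$ and $\Delta e=\Delta d=\Delta f=0$, the quantity $2k-v$ changes by $2(-1)-(-1)=-1$; the two terms do not cancel, and your own formula would yield $\Delta\varepsilon=-1$ rather than $0$. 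The correct accounting is $\Delta k=\Delta v=\Delta f=-1$ and $\Delta e=\Delta d=0$, whence $\Delta\varepsilon=2(-1)-(-1)-(-1)=0$. With that correction, the rest of your argument (the inertness of the splicing arc under $(\cdot)^A$ and the reduction to a single join applied to $H_1^{A_1}\cup H_2^{A_2}$) goes through and recovers the proposition.
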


\begin{proof}
The underlying phenomenon is the additivity of Euler-genus over the two operations. We
observe that for any $A\subseteq E(H_{1})\cup E(H_{2})$, we have
$$(H_{1}\cup H_{2})^{A}=H_{1}^{A\cap E(H_{1})}\cup H_{2}^{A\cap E(H_{2})}$$
and
$$(H_{1}\vee H_{2})^{A}=H_{1}^{A\cap E(H_{1})}\vee H_{2}^{A\cap E(H_{2})}, $$
From these equations,  the result follows.
\end{proof}

\begin{lemma}\label{lem00}
For a ribbon hypermap $H$, if there exist four common line segments that alternate, then $\varepsilon (H)>0$.
\end{lemma}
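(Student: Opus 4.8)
The plan is to connect the combinatorial "alternating" condition at a hypervertex $v$ to the structure of the associated ribbon graph $R(H)$, where the standard interlacement criterion for positive genus is available. First I would recall that, by the remark following Construction \ref{cons01}, $\varepsilon(H)=\varepsilon(R(H))$, so it suffices to prove $\varepsilon(R(H))>0$ whenever $H$ has four alternating common line segments. The key observation is that the alternating condition is essentially a statement about the cyclic arrangement of four ribbon-graph edges around the vertex $v$ of $R(H)$, together with a connectivity condition on $R(H)\setminus v$ that forces the two relevant pairs of edges to interleave along a genuine closed walk.

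The main steps, in order, would be as follows. First, set up notation: let $cl_1,cl_2,cl_3,cl_4$ be common line segments of hyperedges $e,f,g,h$, alternating at $v$ in the cyclic order $(cl_1\cdots cl_2\cdots cl_3\cdots cl_4\cdots)$, with $v_e,v_g$ in one component of $R(H)\setminus v$ and $v_f,v_h$ in another. Second, in $R(H)$ these four common line segments correspond to four edge-ends at $v$ (ends of the ribbon-graph edges joining $v$ to $v_e$, to $v_f$, to $v_g$, and to $v_h$ respectively). Since $v_e$ and $v_g$ lie in the same component of $R(H)\setminus v$, there is a path $P_1$ in $R(H)\setminus v$ from $v_e$ to $v_g$; together with the two edges from $v$ this gives a cycle $C_1$ through $v$ using exactly the edge-ends at $cl_1$ and $cl_3$. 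Similarly there is a cycle $C_2$ through $v$ using the edge-ends at $cl_2$ and $cl_4$, and since $P_1,P_2$ live in different components of $R(H)\setminus v$, the cycles $C_1$ and $C_2$ meet only at $v$. Third, because the four edge-ends appear in the cyclic order $cl_1,cl_2,cl_3,cl_4$ around $v$, the cycles $C_1$ and $C_2$ are "interlaced" at $v$ in the sense used in the ribbon-graph literature: contracting a spanning tree that contains $P_1\cup P_2$ (and avoids the two pairs of edges) reduces $R(H)$ to a ribbon graph containing a bouquet of two interlaced loops at a single vertex, which has Euler genus at least $1$. Hence $\varepsilon(R(H))\geq 1>0$, and therefore $\varepsilon(H)>0$.

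The step I expect to be the main obstacle is the third one: making the passage from "four edge-ends in alternating cyclic position around $v$, joined in pairs by disjoint paths in $R(H)\setminus v$" to "$R(H)$ contains an interlaced pair of loops, hence has positive genus" fully rigorous. The cleanest way is probably to choose, within the component of $R(H)\setminus v$ containing $v_e,v_g$, a subtree $T_1$ connecting $v_e$ to $v_g$, and similarly $T_2$ connecting $v_f$ to $v_h$ in the other component; extend $T_1\cup T_2$ together with the four edges at $cl_1,cl_3$ (no, only enough edges to keep things a forest) to a spanning tree $T$ of $R(H)$ avoiding, say, the edges $vv_e$-type and $vv_f$-type that we want to turn into loops; contract $T$. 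Contraction does not change the Euler genus, and what remains has a vertex (the image of $v$) at which two loops — the images of $C_1$ and $C_2$ — interlace, because contraction along $T$ preserves the cyclic order of the surviving half-edges at $v$. A bouquet with two interlaced loops has $\varepsilon=1$, and deleting the remaining edges cannot decrease $\varepsilon$ (it is nonincreasing), so $\varepsilon(R(H))\geq 1$. One must be a little careful that the contracted tree can indeed be chosen to leave both chosen edges at $v$ as loops and to realize both $C_1$ and $C_2$ as loops simultaneously; this is where the disjointness of the two components of $R(H)\setminus v$ is used crucially, since it guarantees $T_1$ and $T_2$ are vertex-disjoint and neither uses $v$.
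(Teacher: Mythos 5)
Your proof is correct and follows essentially the same route as the paper: both pass to the associated ribbon graph $R(H)$, use the connectivity hypothesis to obtain two disjoint paths joining $v_e$ to $v_g$ and $v_f$ to $v_h$, and conclude from the resulting pair of interlaced cycles through $v$ that $\varepsilon(R(H))>0$ via monotonicity of Euler genus under edge deletion (the paper exhibits this substructure directly as a sub-ribbon graph $SR(H)$ rather than contracting a spanning tree). Two cosmetic quibbles that do not affect the argument: a bouquet of two interlaced orientable loops has $\varepsilon=2$ rather than $1$, and your phrase ``deleting the remaining edges cannot decrease $\varepsilon$'' should read ``cannot increase $\varepsilon$,'' which is the direction your parenthetical ``nonincreasing'' already indicates.
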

	
\begin{proof}
We consider the corresponding ribbon graph $R(H)$ of $H$. If there exist four common segments  $cl_1, cl_2, cl_3$, and $cl_4$ which alternate, we assume that these segments are common line segments of hyperedges $e, f, g$, and $h$, respectively. Then there exists a cyclic order $(cl_{1}\cdots  cl_{2}\cdots  cl_{3}\cdots  cl_{4}\cdots)$ around the boundary of a hypervertex $v$ of $H$ such that the vertices $v_{e}$ and $v_{g}$ lie in one connected component of $R(H)\setminus v$, and the vertices $v_{f}$ and $v_{h}$ lie in a different connected component of $R(H)\setminus v$. Consequently, there exist two disjoint paths with endpoints $v_{e}, v_{g}$ and $v_{f}, v_{h}$, respectively. Hence, there is a sub-ribbon graph of $R(H)$, denoted by $SR(H)$, as shown in Figure \ref{sR(H)}. It follows that $$\varepsilon(H)=\varepsilon(R(H))\geqslant\varepsilon(SR(H))>0.$$
\begin{figure}[htbp]
    \centering
    \includegraphics[width=10cm]{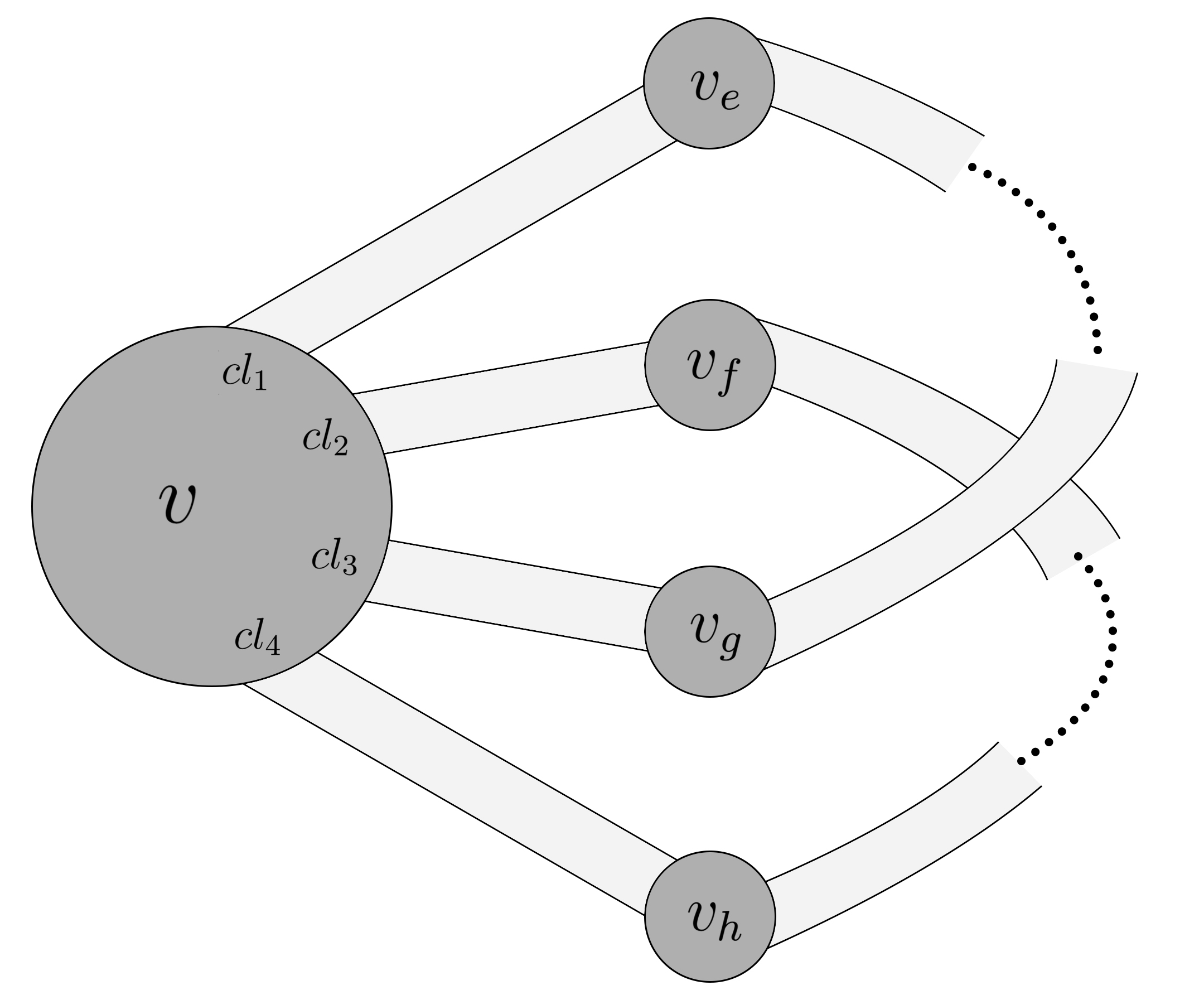}
    \caption{SR(H)}
    \label{sR(H)}
\end{figure}
\end{proof}

\begin{definition}
The intersection graph $I(B)$ of a hyper-bouquet $B$
is defined as follows: The vertex set of $I(B)$ is 
$E(B)$, and two vertices $e$ and 
$f$ in $I(B)$ are adjacent if and only if there exist four common line segments, denoted by $e_i, f_m, e_j$, and $f_n$, that alternate at the unique vertex of $B$. Here, $e_i$ and $e_j$ are two common line segments of $e$, and 
$f_m$ and $f_n$ are two common line segments of $f$. See Figure \ref{19} for an example.
\end{definition}
\begin{figure}[htbp]
\centering

\includegraphics[width=10cm]{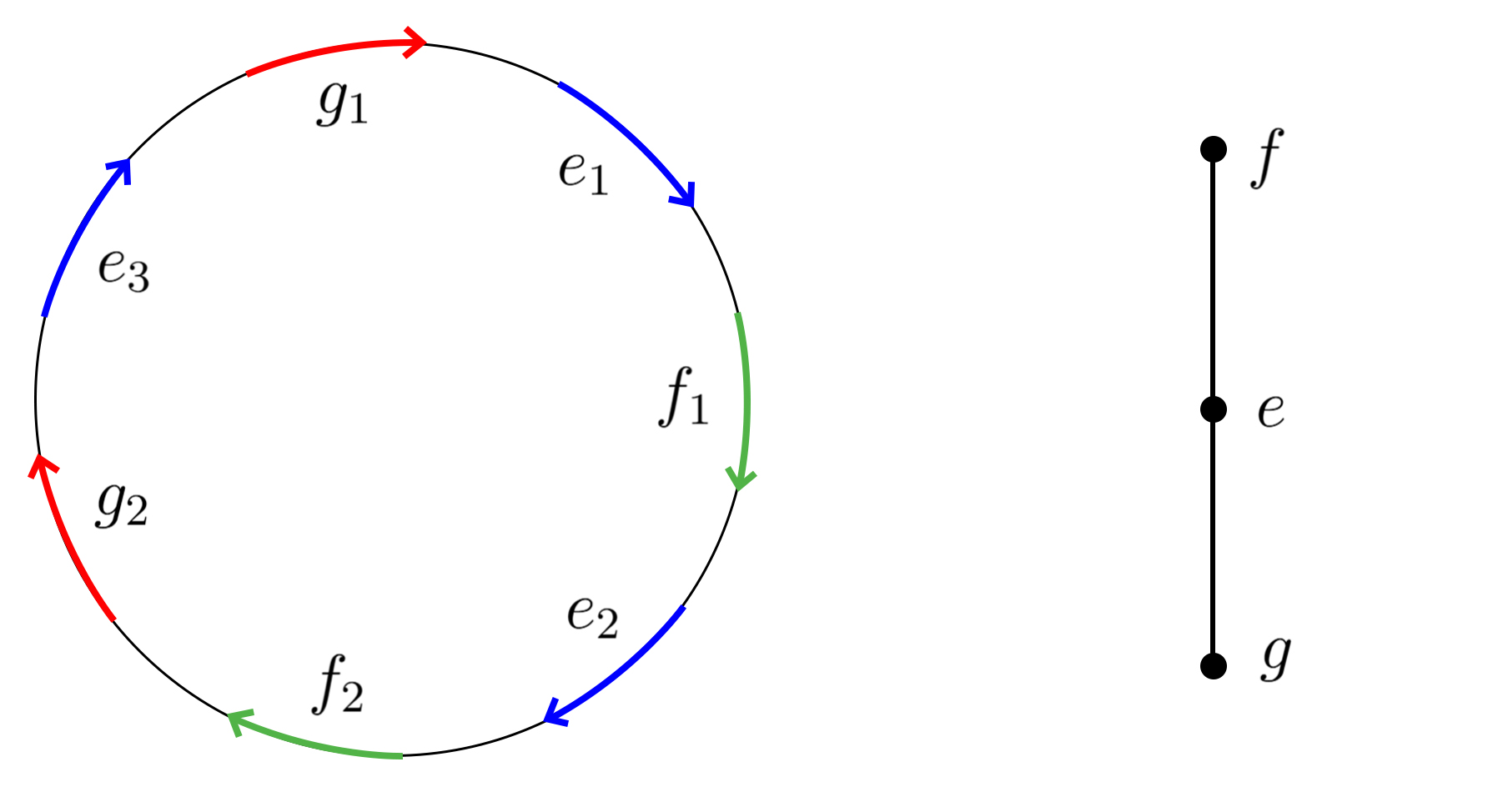}
\caption{A hyper-bouquet and its intersection graph}
\label{19}
\end{figure}
	
\begin{theorem}
The partial-dual polynomial $^\partial{\varepsilon_{(B)}}(z)$ of a hyper-bouquet $B$ contains a nonzero constant term if and only if $I(B)$ is bipartite and $\varepsilon(e)=0$ for all $e\in E(B)$.
\end{theorem}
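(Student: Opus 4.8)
The plan is to characterize when the constant term of $^\partial\varepsilon_{B}(z)$ is nonzero, i.e., when there exists $A \subseteq E(B)$ with $\varepsilon(B^A) = 0$. By Corollary \ref{cor01}, for a hyper-bouquet we have $\varepsilon(B^A) = \varepsilon(A) + \varepsilon(A^c)$, so $\varepsilon(B^A) = 0$ if and only if $\varepsilon(A) = 0$ and $\varepsilon(A^c) = 0$. Thus the constant term is nonzero precisely when $E(B)$ can be partitioned into two sets $A, A^c$, each inducing a plane (Euler genus zero) sub-ribbon-hypermap of $B$. The task is to translate this condition into the stated one about $I(B)$ being bipartite and each hyperedge having genus zero.

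First I would establish the easy direction of each implication. If some $B^A$ has $\varepsilon(B^A) = 0$, then $\varepsilon(A) = \varepsilon(A^c) = 0$; since each individual hyperedge $e$ sits inside either $A$ or $A^c$ and $\varepsilon$ is monotone under deletion of hyperedges (the Remark after Construction \ref{cons01}), we get $\varepsilon(e) = 0$ for all $e$. Moreover, I claim $A$ and $A^c$ must each be an independent set in $I(B)$: if two hyperedges $e, f$ both lie in $A$ and are adjacent in $I(B)$, then there are four common line segments $e_i, f_m, e_j, f_n$ alternating at the vertex of $B$, and these four segments also alternate inside the sub-hyper-bouquet $A$ (deleting other hyperedges does not affect the arrangement around the vertex nor the relevant connectivity in $R(A)\setminus v$), so Lemma \ref{lem00} gives $\varepsilon(A) > 0$, a contradiction. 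Hence $(A, A^c)$ is a proper $2$-coloring of $I(B)$, so $I(B)$ is bipartite. Conversely, suppose $I(B)$ is bipartite with parts $A, A^c$ and $\varepsilon(e) = 0$ for every $e$. I would then argue $\varepsilon(A) = 0$: since $A$ is an independent set in $I(B)$, no four common line segments of hyperedges in $A$ alternate at the vertex, and combined with each $\varepsilon(e) = 0$, the sub-hyper-bouquet $A$ should be plane.

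The main obstacle is the last step: showing that a hyper-bouquet with no alternating quadruple of common line segments (an independent intersection graph) and all hyperedges of genus zero must itself have Euler genus zero. Lemma \ref{lem00} only gives one direction (alternation forces positive genus), so I need a converse at the level of a whole hyper-bouquet. My approach would be to pass to the corresponding ribbon graph $R(A)$ via Construction \ref{cons01}, whose Euler genus equals that of $A$. Each genus-zero hyperedge $e$ becomes a central vertex $v_e$ with $d(e)$ pendant-type edges $e_1, \dots, e_{d(e)}$ attached in a planar configuration; the condition that no two hyperedges in $A$ alternate at $v$ means that, around the boundary of $v$, the common line segments belonging to distinct hyperedges are "non-crossing" in the sense that the central vertices of one hyperedge are separated from those of another in $R(A)\setminus v$. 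I would show this non-crossing nesting structure lets one build a planar (genus-zero) embedding of $R(A)$ by induction on $e(A)$: peel off an "innermost" hyperedge $e$ whose common line segments appear consecutively (in a suitable nested sense) around $v$, use $\varepsilon(e) = 0$ to embed its piece in a disk, and glue. The bookkeeping here — making precise "innermost" and verifying that removing such a hyperedge from an independent-intersection-graph hyper-bouquet leaves another one — is the delicate part; an alternative is to argue directly with Euler characteristic, counting how $f(A)$ behaves as hyperedges are added one at a time under the non-alternating hypothesis. Once this converse is in hand, both directions close and the theorem follows.
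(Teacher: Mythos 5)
Your proposal is correct and follows essentially the same route as the paper: Corollary \ref{cor01} reduces the question to finding $A$ with $\varepsilon(A)=\varepsilon(A^c)=0$, Lemma \ref{lem00} shows that two adjacent hyperedges of $I(B)$ lying in the same part force positive genus (the paper phrases this via an odd cycle rather than via independent sets, which is equivalent), and monotonicity of $\varepsilon$ under hyperedge deletion gives $\varepsilon(e)=0$. The one step you flag as delicate is precisely the step the paper asserts without detailed justification, namely that a pairwise non-interleaving (independent) set of genus-zero hyperedges decomposes as an iterated one-vertex-join $e_1\vee\cdots\vee e_{|X|}$, so that $\varepsilon(X)=\sum_i\varepsilon(e_i)=0$ by additivity as in Proposition \ref{pro01}; your innermost-hyperedge induction is the standard way to make that assertion rigorous.
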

	
\begin{proof}
Since $^\partial{\varepsilon_B}(z)$ contains a nonzero constant term, it follows  that $B$ is a partial dual of a plane ribbon hypermap.  Assume that 
$I(B)$ is not bipartite. Then $I(B)$ contains an odd cycle $C$. Let $D$ be the subset of hyperedges of 
$B$ corresponding to the vertices of $C$. Clearly, removing hyperedges cannot increase the Euler genus.
Hence, for any subset $A$ of $E(B)$, we have $\varepsilon (A\cap D)\leqslant\varepsilon (A)$ and $\varepsilon (A^{c}\cap D)\leqslant\varepsilon (A^{c})$. Since $I(B)$ contains the odd cycle $C$, 
there exist two adjacent vertices when the vertices of $C$ are partitioned into two sets. Denote the hyperedges corresponding to these two vertices by $e$ and $f$. Then either both $e$ and  $f$ are in $A\cap D$ or both are in $A^{c}\cap D$ such that their half-edges meet in the cyclic order 
$(e_{i}\cdots f_{m}\cdots e_{j}\cdots f_{n}\cdots)$
when traversing the boundary of the unique vertex of $B$. Hence,  the four common line segments $e_{i}, f_{m}, e_{j}$, and $f_{n}$ alternate. Therefore, by Lemma \ref{lem00}, we have $\varepsilon (A\cap D)+\varepsilon (A^{c}\cap D)>0$. Consequently, $\varepsilon(B^{A})=\varepsilon(A)+\varepsilon(A^{c})>0$ by Corollary \ref{cor01}.
However, since $B$ is a partial dual of a plane ribbon hypermap, there exists $A^{'}\subseteq E(B)$ such that $\varepsilon (B^{A^{'}})=0$, which leads to a contradiction. If there exists a hyperedge $h$ such that $\varepsilon(h)>0$, then for any $M\subseteq E(B)$ we have $\varepsilon (B^{M})=\varepsilon (M)+\varepsilon (M^{c})\geqslant\varepsilon (h)>0$, which contradicts that $^\partial{\varepsilon_{(B)}}(z)$ contains a nonzero constant term. Hence, $\varepsilon(e)=0$ for all $e\in E(B)$.

Conversely, if $I(B)$ is bipartite and $\varepsilon(e)=0$ for all $e\in E(B)$, then its vertex set can be partitioned into two subsets $X$ and $X^c$ such that every edge of $I(B)$ has one end in $X$ and the other end in $X^c$. For these two subsets $X$ and $X^c$ of the vertex set of $I(B)$, we also denote these two corresponding hyperedge subsets of $B$ by $X$ and $X^c$. Obviously, the hyperedges of $X$ do not intersect each other. Let $X=\{e_1, \cdots, e_{|X|}\}$. Then $X=e_1\vee \cdots \vee e_{|x|}$. Hence, $\varepsilon (X)=\varepsilon(e_1)+\cdots+\varepsilon(e_{|x|})=0$.
Similarly, for $X^c$, we have $\varepsilon (X^c)=0$.
Thus, $\varepsilon (B^{X})=\varepsilon (X)+\varepsilon (X^c)=0$ by Corollary \ref{cor01}. Therefore, $^\partial{\varepsilon_B}(z)$ contains a nonzero constant term.
\end{proof}

We say that a ribbon hypermap $H$ is {\it prime}, if there do not exist non-empty sub-ribbon hypermaps $H_1, \cdots, H_k$ of $H$  such that $H=H_1 \vee \cdots \vee H_k$ where $k\geqslant 2$.
	
\begin{theorem}\label{th00}
Let $H$ be a prime connected ribbon hypermap. Then $^\partial{\varepsilon_{H}}(z)=k$ for some integer $k$ if and only if $H$ is a plane ribbon hypermap with $e(H)=1$.
\end{theorem}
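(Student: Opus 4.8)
The plan is as follows. For the forward implication, suppose $H$ is a plane ribbon hypermap with $e(H)=1$, say $E(H)=\{e\}$. Then the only edge subsets are $\emptyset$ and $\{e\}$, with $H^{\emptyset}=H$ and $H^{\{e\}}$ the classical dual of $H$, and both have Euler genus $0$: indeed, applying Corollary \ref{cor04} with $k(\{e\})=k(H)=1$, $k(\emptyset)=v(H)$ and $\varepsilon(\{e\})=\varepsilon(\emptyset)=0$ gives $\varepsilon(H^{\{e\}})=0$. Hence $^{\partial}\varepsilon_{H}(z)=2$, a constant.

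For the converse, suppose $^{\partial}\varepsilon_{H}(z)=k$ for some integer $k$. Since $\varepsilon(H^{A})\geq 0$ for every $A$ and distinct powers of $z$ cannot cancel, constancy forces $\varepsilon(H^{A})=0$ for all $A\subseteq E(H)$ (so in fact $k=2^{e(H)}$); in particular $\varepsilon(H)=0$, and since deleting hyperedges never raises the Euler genus, every sub-ribbon hypermap $A\subseteq E(H)$ is plane as well. For a hyperedge $e$, let $t_{e}$ denote the number of distinct hypervertices incident to $e$, so that $k(\{e\})=v(H)-t_{e}+1$; a short argument (each component of $H-e$ contains a hypervertex of $e$) gives $k(H-e)\leq t_{e}$. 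Substituting $\varepsilon(H^{\{e\}})=0$, $\varepsilon(\{e\})=\varepsilon(H-e)=0$ and $k(H)=1$ into Corollary \ref{cor04} yields $k(\{e\})+k(H-e)=v(H)+1$, whence $k(H-e)=t_{e}$: deleting any hyperedge of $H$ splits it into exactly $t_{e}$ plane components, one around each hypervertex of $e$. I will show that when $e(H)\geq 2$ this forces $H$ to be a one-vertex-joint of two non-empty pieces, contradicting primeness; hence $e(H)\leq 1$, and (excluding the trivial edgeless hypermap) $e(H)=1$, which together with $\varepsilon(H)=0$ completes the proof.

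So assume $e(H)\geq 2$. If $v(H)=1$, i.e.\ $H$ is a hyper-bouquet, then $\varepsilon(H)=0$ and Lemma \ref{lem00} show that no four common line segments alternate, so the intersection graph $I(H)$ is edgeless; the hyperedges are then pairwise non-crossing around the unique hypervertex, and---as for non-crossing partitions---one of them occupies a single arc and can be split off, giving $H=B_{e_{1}}\vee\cdots\vee B_{e_{m}}$ with $m=e(H)\geq 2$, so $H$ is not prime. If $v(H)\geq 2$, connectedness provides a hyperedge $f$ with $t_{f}\geq 2$, and then $H-f$ has $t_{f}\geq 2$ plane components; since $e(H)\geq 2$, one of these components $C$ carries a hyperedge. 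Let $w$ be the hypervertex of $f$ lying in $C$. Applying Lemma \ref{lem00} to the sub-ribbon hypermap $\{f\}\cup E(C)$ (together with planarity of $H$) shows that the half-edges of $C$'s hyperedges at $w$ form a single arc of $\partial w$ disjoint from the $f$-half-edges at $w$, so $w$ realizes a one-vertex-joint $H=H'\vee C$ with $f\in H'$. If $e(C)=1$ both factors are non-empty and we are done; if $e(C)\geq 2$, the properties established above are inherited by $C$, so an induction on $e(H)$ decomposes $C$, hence $H$.

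I expect the last paragraph to be the crux. Turning the numerical identity ``$H-f$ has exactly $t_{f}$ components'' and the local planarity/alternation estimates into a bona fide one-vertex-joint at a hypervertex is delicate: the deleted hyperedge $f$ spans several of these components, so it cannot itself be cut, one must split at a carefully chosen hypervertex rather than ``undo'' $f$, and the arrow-presentation surgery (and the check that both resulting factors are non-empty) requires care when $C$ meets $w$ in several hyperedges. By contrast, the hyper-bouquet case and the ``planarity forbids alternation'' ingredient (Lemma \ref{lem00}) are routine, the latter being the exact hypermap analogue of the corresponding ribbon-graph fact.
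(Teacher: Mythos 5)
Your sufficiency direction and the numerical reductions are fine: constancy does force $\varepsilon(H^{A})=0$ for every $A$, and Corollary \ref{cor04} then gives $k(\{e\})+k(H-e)=v(H)+1$, hence $k(H-e)=t_{e}$; the hyper-bouquet case also goes through via the non-crossing-partition argument. The gap is exactly in the step you flag as the crux, and it is a real one: in the case $v(H)\geq 2$ you assert that Lemma \ref{lem00} forces the half-edges of $C$'s hyperedges at $w$ to form a \emph{single} arc of $\partial w$ disjoint from the $f$-half-edges. That does not follow. Alternation in the sense of Lemma \ref{lem00} requires the two central vertices interleaving with $v_{f}$ to lie in a \emph{common} component of $R(H)\setminus w$ distinct from the component of $v_{f}$; if $C\setminus\{w\}$ is disconnected, hyperedges of $C$ coming from different branches can occupy different gaps between the $f$-half-edges without producing any alternation. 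Concretely, take hypervertices $w,u$, a hyperedge $f$ with two half-edges at $w$ and one at $u$, and two pendant hyperedges $c_{1},c_{2}$ each with a single half-edge at $w$, the cyclic order at $w$ being $(f,c_{1},f,c_{2})$. Here $\varepsilon(H)=0$, $H$ is connected, $k(H-f)=t_{f}=2$, and $C=\{w,c_{1},c_{2}\}$ is a single component of $H-f$, yet its half-edges at $w$ are separated by the two $f$-half-edges and no four common line segments alternate (the central vertices $v_{c_{1}}$ and $v_{c_{2}}$ lie in different components of $R(H)\setminus w$). So the decomposition $H=H'\vee C$ you aim for does not exist at $w$ in the form you describe, and your induction on $e(C)$ does not repair this, since the failure already occurs with $e(C)=2$.

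The strategy is salvageable, but you must rerun the non-crossing argument \emph{at $w$} instead of invoking a single arc: the half-edges at $w$ are partitioned into blocks according to the component of $R(H)\setminus w$ containing their central vertices, these blocks are pairwise non-interleaving by Lemma \ref{lem00}, and since there are at least two blocks one of them other than the $f$-block is contiguous and splits off as a one-vertex-joint factor (in the example above, $c_{1}$ splits off), which is what actually contradicts primeness. Note that the paper takes a genuinely different route that bypasses the decomposition entirely: it fixes a hyperedge $g$, shows that any two ``connected'' common line segments of $E(H)\setminus g$ must lie on the same boundary component of $H|_{g}$ when $\varepsilon(H)=0$, and then exhibits an alternation at a hypervertex of the single partial dual $H^{g}$, so that $\varepsilon(H^{g})>0$ by Lemma \ref{lem00} and the polynomial cannot be constant. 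Your approach, once the block argument is supplied, is a legitimate alternative and arguably makes the role of primeness more transparent, but as written the key geometric step fails.
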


\begin{proof}
For sufficiency, if $H$ is a plane ribbon hypermap with $e(H)=1$, then we 
 simply obtain that $^\partial{\varepsilon_{H}}(z)=2$.

For necessity, we know that $\varepsilon(H)=0$ by $^\partial{\varepsilon_{H}}(z)=k$. Suppose that $H$ is prime, connected  and $e(H)\geqslant2$. Let 
$cl_{1}$ and $cl_{2}$ be two common line segments of hyperedges $e$ and $f$ (it is possible that $e=f$), respectively. And $cl_{1}$ and $cl_{2}$  lie on the boundaries of the hypervetrices $v_1$ and $v_2$ (it is possible that $v_1=v_2$), respectively. We define that the two common line segments $cl_{1}$ and $cl_{2}$ are connected if $v_e$ and $v_f$ (the central vertices of $e$ and $f$) are in the same connected component of $R(H)\setminus\{v_1, v_2\}$. 

Let $g$ be a hyperedge of $H$ and let $H|_{g}$ be the induced sub-ribbon hypermap of the hyperedge $g$ in $H$.
We claim that for any two connected common line segments belonging to $E(H)\setminus g$, then these two segments lie on exactly the same boundary  component of $H|_{g}$. If there exist two common line segments $s_{1}$ and $s_{2}$ of $E(H)\setminus g$ lie on different boundary components of $H|_{g}$, then we have a connected sub-ribbon graph of $R(H)$, denoted by $SR(H)$, as shown in Figure \ref {R(H')}. Note that 
$$\varepsilon(R(H|_{g}))\leqslant\varepsilon (R(H))=\varepsilon(H)=0.$$ Hence $\varepsilon(R(H|_{g}))=0$. 
By Euler's formula, 
$$\varepsilon(R(H|_{g}))=2-\chi(R(H|_{g}))=2-((v(H|_{g})+1)-d(g)+f(H|_{g}))=0.$$ So we have 
\begin{eqnarray*}
\varepsilon(SR(H))&=&2-\chi(SR(H))\\
&=&2-((v(H|_{g})+1+i)-(d(g)+(i+1))+(f(H|_{g})-1))\\
&=&2>0,    
\end{eqnarray*}
where $i$ is the number of the vertices of a path connecting the central vertices corresponding $s_{1}$ and $s_{2}$. It contradicts $\varepsilon(SR(H))=\varepsilon(R(H))=\varepsilon(H)=0$.
So our claim is established.
\begin{figure}[htbp]
    \centering
    \includegraphics[width=13cm]{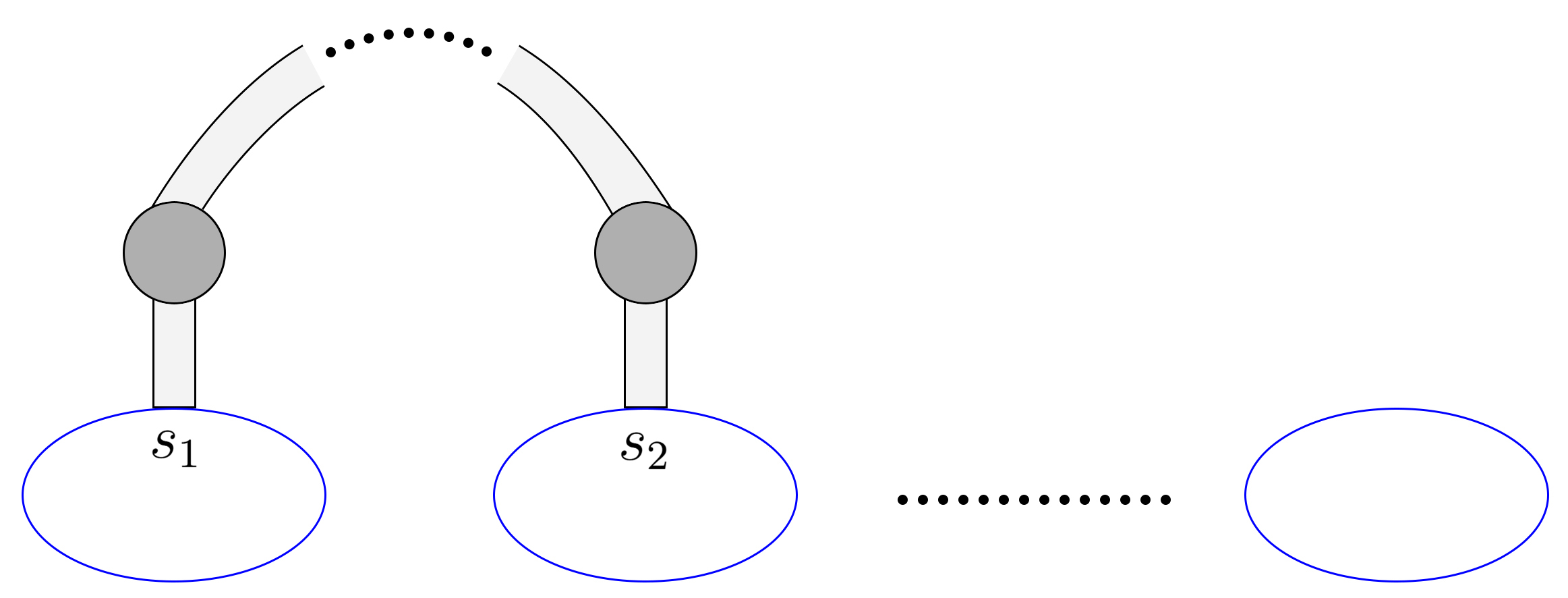}
    \caption{$SR(H)$}
    \label{R(H')}
\end{figure}
		
For $H^{g}$, each boundary component of $H|_{g}$ is a hypervertex of $H^{g}$. By the claim, there exist two connected common line segments $cl_1$ and $cl_2$ in $E(H^{g})\setminus g$, which lie on the boundary of a hypervertex $v$ of $H^{g}$. Moreover, there are two common line segments $g_{i}$ and $g_{j}$ of $g$ such that $cl_1, g_{i}, cl_2$ and $g_{j}$ alternate at $v$. Thus, by Lemma \ref{lem00}, we have $\varepsilon(H^{g})>0$ , which contradicts $^\partial{\varepsilon_{H}}(z)=k$.  Therefore, $H$ is a plane ribbon hypermap with $e(H)=1$.
\end{proof}

\begin{proposition}\label{pro06}
Let $H$ be a ribbon hypermap. Then $H$ is a plane hyper-bouquet if and only if $H^{*}$ is a hypertree. 
\end{proposition}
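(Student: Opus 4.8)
The plan is to reduce the statement to a few numerical identities relating the invariants of $H$ to those of its classical dual $H^{*}=H^{E(H)}$, and then to match the two definitions directly. Recall that, by the conventions in (C6)--(C7), a plane hyper-bouquet is a connected ribbon hypermap with $v(H)=1$ and $\varepsilon(H)=0$, whereas a hypertree is a ribbon hypermap with $f(H)=1$ and $\varepsilon(H)=0$. In both cases connectedness comes for free: $v(H)=1$ (respectively $f(H)=1$) forces $k(H)=1$, since each connected component contributes at least one hypervertex (respectively at least one hyperface). Hence it suffices to prove the two equivalences $v(H)=1\Leftrightarrow f(H^{*})=1$ and $\varepsilon(H)=0\Leftrightarrow\varepsilon(H^{*})=0$.

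First I would record the relevant invariants of $H^{*}$ by specializing the computations already carried out for partial duals. Taking $A=E(H)$, so that $A$ viewed as a sub-ribbon hypermap is $H$ itself and $A^{c}=\emptyset$, the identities used in the proof of Theorem \ref{the01} give $v(H^{*})=f(E(H))=f(H)$ and $f(H^{*})=v(H^{\emptyset})=v(H)$. For the Euler genus I would apply Corollary \ref{cor04} with $A=E(H)$: the edgeless sub-ribbon hypermap $\emptyset$ on the $v(H)$ hypervertices of $H$ has $k(\emptyset)=v(H)$, $f(\emptyset)=v(H)$ and $\varepsilon(\emptyset)=0$, while $k(E(H))=k(H)$, so all the correction terms cancel and one obtains $\varepsilon(H^{*})=\varepsilon(H)$. (Equivalently, one may first get $\chi(H^{*})=\chi(H)$ from Theorem \ref{the01}.)

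With these identities the equivalence is immediate: $H$ is a plane hyper-bouquet exactly when $v(H)=1$ and $\varepsilon(H)=0$, which by the above holds exactly when $f(H^{*})=1$ and $\varepsilon(H^{*})=0$, i.e. exactly when $H^{*}$ is a hypertree. Since partial duality is an involution, $(H^{*})^{*}=H$, so running the same argument with $H^{*}$ in place of $H$ supplies the reverse implication with no extra effort.

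I do not anticipate a genuine obstacle; the only points needing a word of care are verifying the degenerate values $\varepsilon(\emptyset)=0$, $k(\emptyset)=v(H)$, $f(\emptyset)=v(H)$ for the edgeless sub-ribbon hypermap, and making explicit that ``hyper-bouquet'' and ``hypertree'' already entail connectedness, so that attaching the word ``plane'' to ``hyper-bouquet'' is not an extra hypothesis beyond $\varepsilon(H)=0$.
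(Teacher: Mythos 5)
Your proof is correct and follows essentially the same route as the paper's: both reduce the statement to the two identities $f(H^{*})=v(H)$ and $\varepsilon(H^{*})=\varepsilon(H)$ and then match the definitions of plane hyper-bouquet and hypertree. The only difference is that you derive these identities explicitly from Theorem \ref{the01} and Corollary \ref{cor04} (checking the degenerate values for the edgeless sub-ribbon hypermap), whereas the paper simply asserts them; your version is slightly more careful but not a different argument.
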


\begin{proof}
For sufficiency, since $H^{*}$ is a hypertree, we have  $f(H^{*})=1$ and $\varepsilon(H^{*})=0$. Consequently,
$v(H)=v((H^{*})^{*})=f(H^{*})=1$ and $\varepsilon(H)=\varepsilon(H^{*})=0$. Therefore, $H$ is a plane hyper-bouquet. Conversely, if $H$ is a plane hyper-bouquet, then $f(H^*)=v(H)=1$ and
$\varepsilon(H^*)=\varepsilon(H)=0$. Thus, $H^{*}$ is a hypertree.
\end{proof}

\begin{proposition}\label{pro05}
Let $B$ be a plane hyper-bouquet. Then $^\partial{\varepsilon_{B}}(z)=2^{e(B)}$.
\end{proposition}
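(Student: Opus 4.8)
The plan is to show that for a plane hyper-bouquet $B$, \emph{every} subset $A\subseteq E(B)$ yields a partial dual $B^A$ with $\varepsilon(B^A)=0$, so that the polynomial collapses to the single monomial $z^0$ summed $2^{e(B)}$ times. Since $B$ is a hyper-bouquet, Corollary~\ref{cor01} gives $\varepsilon(B^A)=\varepsilon(A)+\varepsilon(A^c)$ for any $A\subseteq E(B)$. Both $A$ and $A^c$ are sub-ribbon hypermaps of $B$ obtained by deleting hyperedges, and by Remark (2) following Construction~\ref{cons01}, deleting a hyperedge never increases the Euler genus; hence $\varepsilon(A)\leqslant\varepsilon(B)=0$ and $\varepsilon(A^c)\leqslant\varepsilon(B)=0$. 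Since Euler genus is non-negative (it equals $2k-\chi$ and each connected component has $\chi\leqslant 2$), we conclude $\varepsilon(A)=\varepsilon(A^c)=0$, and therefore $\varepsilon(B^A)=0$ for all $A$.

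With that established, the enumeration is immediate: $^\partial{\varepsilon_B}(z)=\sum_{A\subseteq E(B)}z^{\varepsilon(B^A)}=\sum_{A\subseteq E(B)}z^0=2^{e(B)}$, using that there are exactly $2^{e(B)}$ subsets of $E(B)$ (cf.\ Proposition~\ref{pro00}(1)). So the whole argument reduces to the two monotonicity/non-negativity facts about Euler genus together with Corollary~\ref{cor01}.

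The only point that needs a word of care — and the place I would be most careful in writing it up — is the claim that $\varepsilon(A)\geqslant 0$ and $\varepsilon(A^c)\geqslant 0$ for these possibly-disconnected sub-ribbon hypermaps. This follows because $\varepsilon$ of a ribbon hypermap is the sum of the Euler genera of its connected components (additivity of both $k$ and $\chi$ over components), and each connected ribbon hypermap, being a connected surface with boundary, has $\varepsilon\geqslant 0$; equivalently one can invoke $\varepsilon(A)=\varepsilon(R(A))$ and the corresponding fact for ribbon graphs. I do not anticipate any genuine obstacle here — this proposition is essentially a corollary of Corollary~\ref{cor01} plus the already-noted fact that hyperedge deletion does not raise the Euler genus.
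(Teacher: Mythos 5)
Your proof is correct, but it takes a genuinely different route from the paper's. The paper first argues structurally that a plane hyper-bouquet $B$ must be the one-vertex-join of its $e(B)$ hyperedges (otherwise some four common line segments would alternate at the unique vertex and Lemma~\ref{lem00} would force $\varepsilon(B)>0$), then invokes Theorem~\ref{th00} to get $^\partial{\varepsilon_{e}}(z)=2$ for each single plane hyperedge $e$, and finally multiplies these via Proposition~\ref{pro01}. You instead bypass the structural decomposition entirely: Corollary~\ref{cor01} gives $\varepsilon(B^A)=\varepsilon(A)+\varepsilon(A^c)$, monotonicity of Euler genus under hyperedge deletion gives $\varepsilon(A),\varepsilon(A^c)\leqslant\varepsilon(B)=0$, and non-negativity of Euler genus (which holds since $\chi$ as defined in (C4) is the Euler characteristic of the capped-off closed surface, so each component contributes $2-\chi_i\geqslant 0$; or equivalently via $\varepsilon(A)=\varepsilon(R(A))$) forces $\varepsilon(A)=\varepsilon(A^c)=0$, hence $\varepsilon(B^A)=0$ for every $A$. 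Your argument is shorter and more self-contained --- it needs neither Lemma~\ref{lem00} nor Theorem~\ref{th00} nor the multiplicativity of Proposition~\ref{pro01} --- and it makes explicit the underlying fact that every partial dual of a plane hyper-bouquet is itself plane. What the paper's route buys in exchange is the structural picture (a plane hyper-bouquet is a one-vertex-join of single plane hyperedges), which is reused in the surrounding results. One small point to tighten in a write-up: note that for a hyper-bouquet every sub-ribbon hypermap $A$ remains connected (all hyperedges attach to the single vertex), so no bookkeeping of components is actually needed when applying the deletion monotonicity, though your component-wise argument covers the general case anyway.
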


\begin{proof}
Since $B$ is a plane hyper-bouquet, it follows that $B$ is the one-vertex-join of $e(B)$ hyperedges. Otherwise, there would exist four common line segments alternating at the unique vertex of $B$ and, by Lemma \ref{lem00}$, 
\varepsilon (B)>0$, which contradicts $\varepsilon (B)=0$. 
For each $e\in E(B)$, we have $\varepsilon (e)=0$.
Therefore, by Theorem \ref{th00}, we have $^\partial{\varepsilon_{e}}(z)=2$. Consequently, by Proposition \ref{pro01}, we have $^\partial{\varepsilon_{B}}(z)=2^{e(B)}$.
\end{proof}

\begin{remark}
By Propositions \ref{pro06}, \ref{pro05}, and \ref{pro00}(2), 
if $H$ is a hypertree, then $^\partial{\varepsilon_{H}}(z)=2^{e(H)}$.
\end{remark}

\section*{Acknowledgements}
This work is supported by NSFC (Nos. 12471326, 12101600).


\end{document}